\theoremstyle{plain}
\newtheorem{thm}{Theorem}[section]
\newtheorem{theorem}[thm]{Theorem}
\newtheorem{corollary}[thm]{Corollary}
\newtheorem{lemma}[thm]{Lemma}
\newtheorem{proposition}[thm]{Proposition}
\theoremstyle{definition}
\newtheorem{defn}[thm]{Definition}
\newtheorem{definition}[thm]{Definition}
\theoremstyle{remark}
\newtheorem{remark}[thm]{Remark}
\newcommand{\okdvn}{\mathcal{O}_{\kappa, \cdot, v}^{\cdot, D, \cdot}(n)}
\newcommand{\iokdvn}{\mathcal{IO}_{\kappa,\cdot, v}^{\cdot, D, \cdot}(n)}
\newcommand{\akdvn}{\mathcal{A}_{\kappa,\cdot, v}^{\cdot, D, \cdot}(n)}
\newcommand{\akdn}{\mathcal{A}_{\kappa,\cdot}^{\cdot, D}(\leq n)}
\newcommand{\xo}{X_{O}}
\newcommand{\ccu}{(\wtu,\Gamma_U, \pi_U)}
\newcommand{\ccw}{(\widetilde{W}, \Gamma_W, \pi_W)}
\newcommand{\tp}{\tilde{p}}
\newcommand{\wtu}{\widetilde{U}}
\newcommand{\wtw}{\widetilde{W}}
\newcommand{\dpp}{\delta^{\prime\prime}}
\newcommand{\ve}{\varepsilon}
\newcommand{\uinpi}{U_i\backslash\{p_i\}}
\newcommand{\unp}{U\backslash\{p\}}
\newcommand{\uitnpi}{\wtu_i\backslash\{\tilde{p}_i\}}
\newcommand{\utnp}{\wtu\backslash\{\tilde{p}\}}
\begin{document}

\title[Orbifold homeomorphism finiteness based on geometric constraints]{Orbifold homeomorphism finiteness based on geometric constraints}

\author[E. Proctor]{Emily Proctor}

\address{Department of Mathematics, Middlebury College, Middlebury, VT 05753}

\email{eproctor@middlebury.edu}

\thanks{{\it Keywords:} Orbifold  \ Global Riemannian
  geometry \ Alexandrov space \ Spectral geometry  } 

\thanks{{\it 2000 Mathematics Subject Classification:}
Primary 53C23; Secondary 53C20, 58J53.}

\thanks{The author was supported in part by an NSF-AWM Mentoring Travel Grant.}

\date{March 1, 2011.}

\begin{abstract}
We show that any collection of $n$-dimensional orbifolds with sectional curvature and volume uniformly bounded below, diameter bounded above, and with only isolated singular points contains orbifolds of only finitely many orbifold homeomorphism types.  This is a generalization to the orbifold category of a similar result for manifolds proven by Grove, Petersen, and Wu.  It follows that any Laplace isospectral collection of orbifolds with sectional curvature uniformly bounded below and having only isolated singular points also contains only finitely many orbifold homeomorphism types.  The main steps of the argument are to show that any sequence from the collection has subsequence that converges to an orbifold, and then to show that the homeomorphism between the underlying spaces of the limit orbifold and an orbifold from the subsequence that is guaranteed by Perelman's stability theorem must preserve orbifold structure.
\end{abstract}

\maketitle

\vspace{5mm}
\section{Introduction}\label{introduction}

The interplay between the geometry and topology of manifolds has long been a theme of Riemannian geometry.  One type of question is that of \textit{finiteness}: by placing certain geometric bounds on a space, we would like to conclude that there are only a finite number of topological structures that the space can admit.

One of the earliest finiteness results was proven by Cheeger in 1970 \cite{Ch}.  He proved that for $n\geq 2$, the collection of closed $n$-dimensional manifolds with sectional curvature uniformly bounded above and below, diameter bounded above, and volume bounded below contains only finitely many diffeomorphism types.  In 1990, Grove, Petersen, and Wu generalized Cheeger's theorem to show that for $n\geq 5$, the upper bound on sectional curvature is unnecessary \cite{GPW}.  They showed that if $n\neq 3$, the class of closed $n$-dimensional manifolds with sectional curvature uniformly bounded below, diameter bounded above, and volume bounded below contains only finitely many homeomorphism types, and only finitely many diffeomorphism types if, in addition, $n\neq 4$. 

Perelman's powerful stability theorem, proven in 1991, states that every compact $n$-dimensional Alexandrov space with curvature bounded below by $\kappa$ has an associated value $\ve$ such that if $Y$ is another compact $n$-dimensional Alexandrov space with curvature bounded below by $\kappa$ that lies within Gromov-Hausdorff distance $\ve$ of $X$, then $X$ and $Y$ are homeomorphic.   (Perelman's original proof is given in \cite{P}.  His proof remained in preprint form and is not widely accessible, so Kapovitch gave a full account of the proof in \cite{K}.)  Since a manifold with sectional curvature bounded below is an example of an Alexandrov space, Perelman's theorem, together with Gromov's compactness theorem, imply Grove, Petersen, and Wu's homeomorphism finiteness result for all values of $n$. 

More recently, there has been developing interest in Riemannian orbifolds.  First introduced by Satake in 1956 under the name $V$-manifold \cite{S1}, an orbifold is a mildly singular generalization of a manifold that is locally modeled on $\mathbb{R}^n$ modulo the action of a finite group.  Since their introduction, orbifolds have been of increasing interest among geometers and topologists.  See, for example, \cite{Th}, \cite{Fu}, \cite{Bz}, \cite{F}, \cite{ALR}, \cite{DGGW}, \cite{KL}. In this paper we prove the following generalization of Grove, Petersen, and Wu's result to the orbifold category.  

\begin{theorem}
For any $\kappa\in \mathbb{R}$, $D>0$, $v>0$, and $n\in\mathbb{N}$, the collection of closed $n$-dimensional orbifolds having sectional curvature universally bounded below by $\kappa$, diameter bounded above by $D$, volume bounded below by $v$, and having only isolated singular points contains orbifolds of only finitely many orbifold category homeomorphism types.
\end{theorem}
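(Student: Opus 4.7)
The plan is to argue by contradiction. Suppose the collection contains orbifolds of infinitely many orbifold homeomorphism types, and extract a sequence $(O_i)$ of mutually non-orbifold-homeomorphic members. Applying Gromov's precompactness theorem to the underlying length spaces $|O_i|$, which are $n$-dimensional Alexandrov spaces with curvature bounded below by $\kappa$ and diameter at most $D$, I would pass to a subsequence with Gromov-Hausdorff limit $X$; the uniform lower volume bound prevents collapse, so $X$ is again an $n$-dimensional Alexandrov space with curvature bounded below by $\kappa$.

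The first main task is to promote $X$ to an orbifold with isolated singular points. Near a singular point $p\in O_i$ the local model is $\mathbb{R}^n/\Gamma$ for some finite subgroup $\Gamma<O(n)$ acting freely on $\mathbb{R}^n\setminus\{0\}$, and the space of directions at $p$ is $S^{n-1}/\Gamma$. Bishop-Gromov volume comparison forces $|\Gamma|$ to be uniformly bounded, so only finitely many isotropy groups appear up to conjugacy in $O(n)$; after passing to a further subsequence I may arrange that the singular points of each $O_i$ carry a fixed labelling by groups. A further volume-comparison argument shows that within each $O_i$ the singular points are uniformly separated and uniformly bounded in total number, so along the subsequence their positions Gromov-Hausdorff-converge (up to relabelling) to a finite subset $S\subset X$. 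At each point of $S$ the tangent cone is a Gromov-Hausdorff limit of tangent cones of the form $\mathbb{R}^n/\Gamma$ and hence is itself of that form, while at every other point of $X$ the tangent cone is $\mathbb{R}^n$. This equips $X$ with a natural orbifold structure whose singular locus is exactly $S$.

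The second main task is to upgrade Perelman's stability homeomorphism $h_i:|O_i|\to|X|$ to an orbifold homeomorphism. Inspecting the strainer-map construction in Kapovitch's account, one sees that $h_i$ can be arranged to send singular points to singular points with matched isotropy, and to be bi-Lipschitz off arbitrarily small neighbourhoods of the singular set. On the manifold locus nothing further is required. Near each matched pair of singular points the map takes the form $\mathbb{R}^n/\Gamma\to\mathbb{R}^n/\Gamma$, and by covering-space theory, after possibly shrinking, any such homeomorphism lifts to a $\Gamma$-equivariant homeomorphism of $\mathbb{R}^n$, giving exactly the local datum required for an orbifold homeomorphism. Consequently, for all sufficiently large $i$ each $O_i$ is orbifold-homeomorphic to $X$, hence to every other $O_j$ in the tail, contradicting the choice of sequence. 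I expect the principal obstacle to be the orbifold-structure step: ruling out the collision of distinct singular points and ensuring that no new singular points of $X$ appear away from the limits of the $p_i$ needs the non-collapsing and isolated-singularity hypotheses in an essential way, whereas the equivariant lifting at the end is a local, essentially standard application of covering-space theory.
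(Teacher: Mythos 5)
Your overall architecture (contradiction, Gromov--Hausdorff limit, non-collapsing, putting an orbifold structure on the limit, then upgrading Perelman's stability homeomorphism by a covering-space lift near each singular point) matches the paper's, and your final lifting step is essentially the paper's argument: for $n\geq 3$ the punctured chart $\wtu\backslash\{\tp\}$ is simply connected, so the homeomorphism of punctured neighborhoods lifts and extends over the cone point. However, there is a genuine gap in your first main task. Your key assertion --- that at a limit $p$ of singular points $p_i$ the tangent cone of $X$ ``is a Gromov--Hausdorff limit of tangent cones of the form $\mathbb{R}^n/\Gamma$ and hence is itself of that form'' --- is false as a general principle: tangent cones (equivalently, spaces of directions) are not continuous under Gromov--Hausdorff convergence, even without collapse. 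Blow-ups do not commute with taking the limit of the spaces; for instance, smooth metrics converging to a metric cone have Euclidean tangent cones at points approaching the apex, while the limit's tangent cone at the apex is the nontrivial cone. In the Alexandrov setting singularities can strictly worsen in the limit, so knowing that each $p_i$ has isotropy $\Gamma$ gives no direct control on the infinitesimal structure of $X$ at $p$, and no orbifold chart there. This is exactly the point where the paper has to work: it first builds ``noncritical'' orbifold charts of a \emph{uniform} radius over each $p_i$ (Section~3, using Stanhope's regularity lemma for isolated singular points), then transfers such a chart to the limit using Perelman--Petrunin strictly convex functions lifted to the $O_i$, Petrunin's convergence theorem for the level sets $\{h_i=\ve\}\to\{f=\ve\}$, Perelman's stability theorem applied to these $(n-1)$-dimensional level sets, and the Morse-theoretic cone structure theorem to identify $\{h_i<\ve\}$ and $\{f<\ve\}$ as cones over homeomorphic links; composing the chart over $\{h_i<\ve\}$ with this homeomorphism produces the chart over $\{f<\ve\}$, with the \emph{same} group $\Gamma$. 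The companion issue you yourself flag --- that no new singular points appear in $X$ away from $\lim p_i$ --- is likewise not settled by a tangent-cone remark; the paper handles it by Kapovitch's argument producing genuine manifold charts at such points.

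Two smaller points. Your claim that the stability homeomorphism ``can be arranged'' (via the strainer construction) to send singular points to singular points with matched isotropy is unsubstantiated and also unnecessary: since $n\geq 3$ and the singular points are isolated non-manifold points, \emph{any} homeomorphism of the underlying spaces must match them, and the matching of isotropy comes from the construction of the limit charts, not from the homeomorphism; the asserted bi-Lipschitz control is not needed at all. Finally, the statement is for all $n\in\mathbb{N}$, and both your tangent-cone discussion and the simple-connectivity of the punctured chart implicitly require $n\geq 3$; the cases $n=1$ (classification by hand) and $n=2$ (the result of Proctor--Stanhope) must be disposed of separately, as the paper does.
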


In 1992, Brook, Perry, and Petersen extended Grove, Petersen, and Wu's result to collections of Laplace isospectral manifolds \cite{BPP}.  They showed that any isospectral collection of manifolds having a uniform lower bound on sectional curvature contains only finitely many homeomorphism types, and only finitely many diffeomorphism types if $n\neq 4$.   They did this by showing that the lower bound on sectional curvature combined with isospectrality implies an upper bound on diameter.   By Weyl's asymptotic formula, any isospectral collection of manifolds all have the same volume and dimension.  Thus the result follows directly.  

In \cite{St}, Stanhope proved that a lower bound on sectional curvature and isospectrality implies an upper bound on diameter for orbifolds as well.   As for manifolds, isospectral orbifolds have the same dimension and volume \cite{DGGW}.  Therefore we have the following.

\begin{corollary}
Any collection of Laplace isospectral orbifolds having a uniform lower bound on sectional curvature and only isolated singular points contains orbifolds of only finitely many orbifold category homeomorphism types.
\end{corollary}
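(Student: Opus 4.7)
The plan is to deduce the corollary as a direct application of Theorem 1.1, using isospectrality to supply the three hypotheses on dimension, volume, and diameter that are not explicitly assumed.

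First, I would fix a Laplace isospectral collection $\mathcal{C}$ of orbifolds with uniform lower sectional curvature bound $\kappa$ and only isolated singular points, and extract from isospectrality the constants needed to invoke Theorem 1.1. By Weyl's asymptotic formula in the orbifold setting (as established in \cite{DGGW}), any two Laplace isospectral orbifolds share the same dimension $n$ and the same volume $v$. In particular, every element of $\mathcal{C}$ is $n$-dimensional and has volume equal to a single common value $v>0$, so the uniform lower bound on volume is automatic.

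Second, I would produce the diameter bound. This is the content of Stanhope's theorem in \cite{St}, which asserts that within a Laplace isospectral family of orbifolds with a uniform lower sectional curvature bound, the diameters are uniformly bounded above by some constant $D$. Combining this with the preceding paragraph, every orbifold in $\mathcal{C}$ is a closed $n$-dimensional orbifold with sectional curvature bounded below by $\kappa$, diameter bounded above by $D$, volume bounded below by $v$, and only isolated singular points.

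Third, I would invoke Theorem 1.1 with these constants $\kappa$, $D$, $v$, and $n$ to conclude that $\mathcal{C}$ contains orbifolds of only finitely many orbifold homeomorphism types. There is really no technical obstacle here: all the substantive work has already been done in Theorem 1.1 (the stability/limit argument in the orbifold category) and in the cited results of Stanhope \cite{St} and \cite{DGGW}. The only thing to verify is that the hypotheses match, which they do once one observes that isospectrality simultaneously fixes $n$ and $v$ and, via \cite{St}, furnishes $D$.
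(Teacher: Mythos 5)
Your proposal is correct and follows essentially the same route as the paper: isospectrality gives a common dimension and volume via \cite{DGGW}, Stanhope's result in \cite{St} (which the paper applies after noting that the sectional bound implies a Ricci lower bound) gives the diameter bound, and the conclusion follows from the main finiteness theorem.
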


The paper is organized as follows.  Section~\ref{background} outlines background information that is used in the paper.  In Section~\ref{noncriticalorbifoldcharts}, we prove the existence of a special type of orbifold chart above each of the singular points of the orbifolds in our collection.   The important feature of these charts is that each lies above a ball of a certain universally fixed radius about the singular point.   Section~\ref{finitepartition} explains how to partition the entire collection into a finite number of subcollections so that orbifolds in each subcollection have the same number and types of singular points.  Since there are only a finite number of subcollections, we may then restrict our arguments to a given subcollection.  In Section~\ref{limitisanorbifold}, we show that any sequence of orbifolds from a fixed subcollection must have a convergent subsequence, and that furthermore, the limit space carries an orbifold structure with the same number and types of singular points as orbifolds in the subsequence.  From Perelman's stability theorem, we know that there is a homeomorphism from the underlying space of the limit orbifold to the underlying space of any orbifold that is far enough along in the subsequence.  In Section~\ref{finiteorbifoldhomeotypes} we show that the topological homeomorphism guaranteed by the stability theorem is in fact an orbifold category homeomorphism.  We then conclude that each subcollection must have only had a finite number of orbifold homeomorphism types to begin with, thus proving our main result.

\subsection{Acknowledgments}  The author is very grateful to Elizabeth Stanhope for suggesting the problem and for helpful conversations.  The author also offers many thanks for Karsten Grove for his support on this project, and thanks to Vitali Kapovitch for clarifying conversation.

\vspace{5mm}
\section{Background}\label{background}

In this section we detail the definitions and notation that will be used in the following sections. 

\subsection{Riemannian orbifolds}

Roughly, an orbifold is a space that locally has the structure of $\mathbb{R}^n$ modulo the action of a finite group.  More concretely, we have the following.

\begin{definition}
Let $\xo$ be a second countable Hausdorff topological space.  Given an open set $U$ contained in $\xo$, an \textit{orbifold chart} over $U$ is a triple $(\wtu, \Gamma_U, \pi_U)$ such that:
\begin{enumerate}
\item $\wtu$ is a connected open subset of $\mathbb{R}^n$,
\item\label{groupaction} $\Gamma_U$ is a finite group that acts on $\wtu$ by homeomorphisms,
\item $\pi_U: \wtu \to U$ is a continuous map such that $\pi_U\circ\gamma = \pi_U$ for all $\gamma\in \Gamma_U$ and that induces a homeomorphism from $\wtu/\Gamma_U$ to $U$.
\end{enumerate} 
\end{definition}

A \textit{topological orbifold} is a topological space $\xo$ covered with a maximal atlas of orbifold charts subject to a compatibility condition (see page 2 in \cite{ALR}).  The space $\xo$ is called the \textit{underlying space} of the orbifold.  

An orbifold is \textit{smooth} if for each chart, $\Gamma_U$ acts by diffeomorphisms.  We define a Riemannian structure on a smooth orbifold by placing a $\Gamma_U$-invaviant Riemannian metric on the local cover $\wtu$ of each orbifold chart $\ccu$ and patching these local metrics together with a partition of unity.  A smooth orbifold with a Riemannian structure is called a \emph{Riemannian orbifold}.  
 
Let $p$ be a point in an orbifold $O$ and suppose that $\ccu$ an orbifold chart over a neighborhood $U$ of $p$.  If $\Gamma_U$ acts with nontrivial isotropy on a point $\tilde{p}$ in $\pi_U^{-1}(p)$, we say that $p$ is a \textit{singular point} of $O$.  A singular point $p$ is \textit{isolated} if there is a neighborhood about $p$ in $O$ containing no other singular points.  Since it can be shown that the isomorphism class of the isotropy group of a lift of $p$ is independent of both the choice of element in $\pi_U^{-1}(p)$ and the choice of orbifold chart about $p$, we unambiguously refer to the isomorphism class of the isotropy group as the \textit{isotropy type} of $p$.  

Suppose that $p$ is a singular point in a Riemannian orbifold $O$.  A \textit{distinguished chart of radius $\ve$} about $p$ is a chart $\ccu$ such that $\wtu$ is a convex geodesic ball of radius $\ve$ centered at $\tp$, $\pi(\tp)=p$, and the group $\Gamma_U$ fixes $\tp$.  Thus the isotropy group of $p$ is represented by $\Gamma_U$.  Notice that in this case, for every $\gamma\in\Gamma_U$, the differential $\gamma_*$ acts orthogonally on $T_{\tp}\wtu$.  We denote the group of differentials by $\Gamma_{U*}$.

The tangent bundle of an orbifold $O$ is denoted $TO$.   If $\ccu$ is a distinguished chart above a neighborhood $U$ of a point $p$ in $O$, the fiber of $TO$ above $p$, denoted $T_pO$, is given by $T_{\tilde p}\wtu/\Gamma_{U*}$.  The manifold exponential, $\exp_{\tilde p}$, maps a neighborhood $V$ of $0$ in  $T_{\tilde p}\wtu$ to $\wtu$.  Suppose that $v$ is an element of $T_pO$ with lift $\tilde v$ in $V$.   Define the orbifold exponential map by $\exp_p(v)=\pi_U(\exp_{\tilde p}(\tilde v))$.  Since $\Gamma_U$ acts by isometries, $\gamma\circ\exp_{\tilde p}=\exp_{\tilde p}\circ\gamma_*$ and thus the orbifold exponential is well defined.  For small $t$, $\exp_p(tv)$ is a length-minimizing geodesic emanating from $p$ in the direction of $v$.   As usual, the manifold exponential map restricts to a diffeomorphism between a neighborhood of $0$ in $T_{\tilde p}\wtu$ and a neighborhood of $\tilde p$ in $\wtu$.  Since $\gamma\circ\exp_{\tilde p}=\exp_{\tilde p}\circ\gamma_*$, the orbifold exponential map gives a homeomorphism between a neighborhood of $0$ in $T_pO$ and a neighborhood of $p$ in $U$.  

Various notions of measurement carry more or less directly from the manifold setting to the orbifold setting.  The diameter and volume of a Riemannian orbifold are defined as they are in the manifold setting.  We say that a Riemannian orbifold has sectional (resp. Ricci) curvature bounded below by $\kappa$ if each point in the orbifold can be locally covered by a manifold with sectional (resp. Ricci) curvature bounded below by $\kappa$.   Following the notation in \cite{GP}, we will denote the collection of all closed, compact $n$-dimensional orbifolds with sectional curvature universally bounded below by $\kappa$, diameter bounded above by $D$, and volume bounded below by $v$ with the symbol $\okdvn$.  We denote the subcollection of $\okdvn$ consisting of orbifolds having only isolated singular points by $\iokdvn$. 

Finally, we say that a function on an orbifold is smooth if at every point, it may be lifted to a smooth function on the local manifold cover above the point.  The Laplace operator acts on smooth functions on Riemannian orbifolds by acting on the local lifts of the function.  As for manifolds, the eigenvalue spectrum of the Laplace operator acting on smooth functions on a compact Riemannian orbifold is a discrete sequence of nonnegative numbers, tending to infinity (\cite{Chi}, \cite{DGGW}).  We say that two orbifolds are \textit{isospectral} if they have the same Laplace spectrum.

\subsection{Alexandrov spaces and distance functions}\label{alexandrovspaces}

Given a topological space, a length structure on the space is a rule that assigns lengths to admissible paths in the space.  For any space with a length structure, we define a metric (in the sense of a distance function) by saying that for points $p$ and $q$ in the space,
\[d(p,q) = \inf\{\text{Length}(c)  :  c \ \text{is an admissible path from} \ p \ \text{to} \ q\}.\]
A metric space whose metric arises from a length structure is called a length space.

In general, length spaces are much more complicated than manifolds or orbifolds, but by using the length structure, we can define the notion of a curvature bound on a length space.  There are multiple equivalent definitions of a bound on curvature, but all are based on making comparisons of distance functions arising from the length structure with distance functions on spaces of constant curvature.  Not all length spaces have a curvature bound.  Those that do, be it an upper or a lower bound, are called Alexandrov spaces.  There are well-defined notions of dimension, diameter, and volume for Alexandrov spaces.  We denote the collection of all $n$-dimensional Alexandrov spaces with curvature bounded below by $\kappa$, diameter bounded above by $D$, and volume bounded below by $v$ by $\akdvn$.  For a very nice exposition of length spaces and Alexandrov spaces, see \cite{BBI}.

In the orbifold setting, once we have a Riemannian metric in place, we may compute lengths of curves, and thus distances between points.  Thus any orbifold is a length space.  One can show that an orbifold with sectional curvature bounded below by $\kappa$ is in fact an Alexandrov space with a lower curvature bound $\kappa$ (see \cite{BBI}, Chapter 6 and \cite{Bz}, p.~42). Thus, we may place the central question of this paper in the setting of Alexandrov spaces of curvature bounded below, and use tools from that setting to analyze our problem.

If an orbifold $O$ is complete with respect to the metric arising from the length structure, any two points in $O$ can be joined by a curve that achieves the distance between them.  Such a curve, parametrized with respect to arclength, is called a \emph{segment} in $O$.   For more information on the length structure of an orbifold and behavior of segments, see \cite{Bz}. 

For fixed $p$ in a Riemannian orbifold, we shall consider the distance function from $p$, $d(p,\cdot)$.  In general, there may be more than one segment running from $p$ to $x$.  Let $G_p(x)$ denote the set of all unit vectors in $T_xO$ that are tangent to segments from $p$ to $x$.  Suppose that $\ccu$ is a distinguished orbifold chart above a neighborhood $U$ of $x$ with $\pi(\tilde x)=x$.  We say that $x$ is an \textit{$\alpha$-regular} point for $d(p,\cdot)$ if there is some vector $\tilde v\in T_{\tilde x}\wtu$ such that $\angle(\tilde v,\tilde w)< \alpha$ for all $\tilde w$ in the lift of  $G_p(x)$ in $T_{\tilde x}\wtu$.  We say that $x$ is \textit{$\alpha$-critical} for $d(p,\cdot)$ if no such vector exists.

\subsection{Gromov's compactness theorem}\label{compactnesstheorem}
We recall the definition of the Gromov-Hausdorff distance between two compact metric spaces.  

\begin{defn}
Let $A$ and $B$ be two closed subsets of a compact metric space $Z$.  The \textit{Hausdorff distance} between $A$ and $B$ is given by
\begin{equation*}
d_H^Z(A,B) = \inf\{\varepsilon\ \vert\ A\subset B(B,\varepsilon)\textrm{ and }B\subset B(A,\varepsilon)\},
\end{equation*}  
where $B(A,\varepsilon) = \{x\in Z\ \vert \ d(x, A) <\varepsilon\}$. 
\end{defn}

Gromov extended this to a definition of distance between to arbitrary compact metric spaces, which are not a priori necessarily subsets of a larger metric space.

\begin{defn} 
Suppose that $X$ and $Y$ are compact metric spaces.  The \textit{Gromov-Hausdorff} distance between $X$ and $Y$ is given by
\begin{equation*}
d_{GH} (X,Y)= \inf\{ d_H^Z (X,Y)\ \vert \ X \textrm{ and } Y \textrm{ are isometrically embedded in } Z\}
\end{equation*}
where the infimum is taken over all possible isometric embeddings of $X$ and $Y$ in $Z$ and $Z$ runs over all possible compact metric spaces.
\end{defn}

\begin{remark}\label{overallmetric}The Gromov-Hausdorff distance induces a metric on the collection of compact metric spaces, and therefore gives a notion of convergence for a sequence of compact metric spaces.  We have the following fact, which is useful when working concretely with convergent sequences (see, for example, \cite{Pe} p.278).   Suppose that $\{X_i\}$ is a sequence of compact metric spaces Gromov-Hausdorff converging to a compact metric space $X$.  Then there exists a metric on the disjoint union $X_i\sqcup_i X$ such that in this space the sequence $\{X_i\}$ converges to $X$ in the Hausdorff metric.
\end{remark}

Suppose that $X$ and $Y$ are two metric spaces.  We say that two maps $\phi,\psi:X\to Y$ are \textit{$\beta$-close} if $d_Y(\phi(x),\psi(x))<\beta$ for all $x\in X$.  A map $\theta: X\to Y$ is a $\beta$\textit{-Hausdorff approximation} if for all $y\in Y$ there exists $x\in X$ such that $d_Y(y,\theta(x))<\beta$ and for all $x_1, x_2\in X$, $\vert d_Y(\theta(x_1), \theta(x_2)) - d_X(x_1,x_2)\vert <\beta$.   Note that when two spaces are close to each other in the Gromov-Hausdorff metric, we can find a $\beta$-Hausdorff approximation from one to the other.

The following important theorem was originally proven in \cite{G}.  For the formulation below, see \cite{BBI}, Theorem 10.7.2.

\begin{theorem}[Compactness Theorem]\label{compactnesstheorem}
Let $\akdn$ denote the class of all Alexandrov spaces with dimension $\leq n$, curvature bounded below by $\kappa$, and diameter bounded above by $D$.  Relative to the Gromov-Hausdorff metric, $\akdn$ is compact.
\end{theorem}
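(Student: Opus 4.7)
The plan is to decompose the compactness of $\akdn$ in the Gromov--Hausdorff topology into two pieces: \emph{precompactness} (every sequence admits a GH-Cauchy subsequence) and \emph{closedness} (the class $\akdn$ is closed under GH limits). Since the space of isometry classes of compact metric spaces equipped with $d_{GH}$ is a complete metric space, these two statements together yield compactness.

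For precompactness, I would invoke the standard criterion that a family of compact metric spaces of uniformly bounded diameter is GH-precompact if and only if it is uniformly totally bounded, meaning that for every $\ve>0$ there is an integer $N(\ve)$ (depending only on $\ve$ and the structural parameters of the family) such that every member admits an $\ve$-net of cardinality at most $N(\ve)$. To obtain such a bound for $\akdn$, first choose a maximal collection of pairwise disjoint open balls of radius $\ve/2$; their centers automatically form an $\ve$-net. The cardinality of this collection is controlled by Bishop--Gromov volume comparison for Alexandrov spaces with curvature bounded below by $\kappa$: the ratio $\Vol B(p,D)/\Vol B(p,\ve/2)$ is bounded above by the corresponding ratio in the $n$-dimensional simply connected model space of constant curvature $\kappa$. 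Since each space in $\akdn$ is contained in a single ball of radius $D$ about any of its points, this bounds the number of disjoint $\ve/2$-balls, and hence the size of the $\ve$-net, by a quantity $N(\ve,n,\kappa,D)$. A standard diagonal extraction using finite labeled $\ve$-nets of successively smaller radius then produces the required GH-Cauchy subsequence of any sequence in $\akdn$.

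For closedness, suppose $X_i\in\akdn$ converges in $d_{GH}$ to a compact metric space $X$. Upper semicontinuity of diameter under GH convergence gives $\mathrm{diam}(X)\leq D$. The capacity estimate above is preserved in the limit, which bounds the Hausdorff dimension of $X$ by $n$ (so that the dimension in the Alexandrov sense is at most $n$ as well). The curvature lower bound passes to the limit because Alexandrov's curvature condition can be formulated as a $(1+3)$-point (quadruple) comparison involving only the mutual distances between four points and the corresponding distances in the $\kappa$-model plane; this is a closed condition on the distance matrix of any quadruple, and since $d_{X_i}\to d_X$ uniformly on corresponding $\ve$-nets (via Hausdorff approximations as in Remark~\ref{overallmetric}), the comparison inequality survives in $X$. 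Hence $X\in\akdn$.

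The principal technical obstacle is the stability of the curvature lower bound under GH convergence. Its resolution exploits the defining feature of Alexandrov geometry, namely that ``curvature $\geq \kappa$'' is a purely metric condition on finite point configurations, and is therefore automatically closed under pointwise convergence of distance functions; the uniform capacity estimate then provides the bridge between the abstract GH convergence and concrete convergence of such configurations. The dimension bound requires slightly more care, since collapse is permitted in the limit, but the inductive $\leq n$ bound is safe because only upper semicontinuity is claimed.
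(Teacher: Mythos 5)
The paper does not prove this statement at all: it is quoted as background (Gromov's compactness theorem), with the proof deferred to Gromov's original paper \cite{G} and to \cite{BBI}, Theorem 10.7.2. So there is no in-paper argument to compare against; what you have written is essentially the standard textbook proof, and it is correct in outline. Your decomposition into uniform total boundedness (hence GH-precompactness) plus closedness of the class under GH limits is exactly how \cite{BBI} organizes the argument: the packing bound comes from relative volume comparison (or an equivalent Toponogov-type packing estimate) valid for Alexandrov spaces with curvature $\geq\kappa$, the curvature bound survives the limit because the quadruple comparison is a closed condition on distance matrices and quadruples can be transferred through Hausdorff approximations, and the uniform capacity estimate bounds the Hausdorff (hence Alexandrov) dimension of the limit by $n$. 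Two small points deserve care if you were to write this out in full. First, since $\mathcal{A}_{\kappa,\cdot}^{\cdot,D}(\leq n)$ contains spaces of every dimension $m\leq n$, the Bishop--Gromov ratio must be taken with the $m$-dimensional Hausdorff measure of the space at hand and compared with the $m$-dimensional model; you then need the elementary observation that the resulting packing bounds are uniform over $m\leq n$, so that a single $N(\ve,n,\kappa,D)$ works for the whole class. Second, your appeal to completeness of the Gromov--Hausdorff space is fine but not strictly necessary: the usual diagonal construction of the limit from nested finite $\ve$-nets produces the limit space directly, which is how \cite{BBI} proceeds. Neither point is a gap in the logic, only in the level of detail.
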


One important corollary of the compactness theorem is that every sequence of spaces in $\akdn$ has a Gromov-Hausdorff convergent subsequence.

\subsection{Orbifold category homeomorphisms}\label{orbifoldhomeomorphism}

When we generalize the notion of a homeomorphism to the orbifold setting, our aim is to define the maps that preserve both the underlying topological structure of the orbifold as well as the singular data arising from the orbifold structure.  Throughout the paper, we will use the term \textit{homeomorphism} to refer to a map that preserves the underlying topological structure and the term \textit{orbifold homeomorphism} to refer to a map that preserves both the topological structure and the orbifold structure.

\begin{definition}\label{weakmap}
Let $O_1, O_2$ be orbifolds.  We say a map $f:O_1\to O_2$ is a \textit{continuous orbifold map} if $f$ is a continuous map from $X_{O_1}$ to $X_{O_2}$ and for any $x\in O_1$ there are orbifold charts $(\widetilde U, \Gamma_U, \pi_U)$ over a neighborhood $U$ of $x$ and $(\widetilde V, \Gamma_V, \pi_V)$ over a neighborhood $V$ of $f(x)$ such that:
\begin{enumerate}
\item $f(U) \subset V$,
\item there exists a continuous lift $\tilde f$ of $f$ carrying $\widetilde U$ to $\widetilde V$ for which $\pi_V \circ \tilde f = f \circ \pi_U$.
\end{enumerate}
\end{definition}

A continuous orbifold map $f:O_1\to O_2$ is a \textit{orbifold homeomorphism} if it is a homeomorphism of the underlying spaces $X_{O_1}$ and $X_{O_2}$ and the lift $\tilde f$ above each point is a homeomorphism from $\wtu$ to $\widetilde V$.  If $O_1$ and $O_2$ are smooth orbifolds and if for every $x\in O_1$, the lift $\tilde f$ is a smooth map, we say that $f$ is an \textit{orbifold diffeomorphism}.

\vspace{5mm}
\section{Noncritical orbifold charts}\label{noncriticalorbifoldcharts}

Suppose that $O\in\iokdvn$ and $p$ is a singular point $O$.  We construct a particular type of orbifold chart about $p$ that we refer to as a ``noncritical chart".  In Section~\ref{limitisanorbifold}, we will use these charts to prove that the Gromov-Hausdorff limit of a sequence of orbifolds in $\iokdvn$ is also an orbifold.

Before beginning the construction, we recall the following lemma (\cite{St} Lemma 8.2, \cite{PrSt} Lemma 3.1), slightly reformulated here.

\begin{lemma}\label{technicallemma}
Let $O\in\okdvn$.   There exist $r>0$ and $\alpha\in (0,\tfrac{\pi}{2})$ depending only on $\kappa, D, v$, and $n$ such that if $d(p,q)<r$, then $p$ is $\alpha$-regular for $d(q,\cdot)$ or $q$ is $\alpha$-regular for $d(p,\cdot)$.  
\end{lemma}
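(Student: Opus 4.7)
My plan is to exploit the orbifold chart structure directly: once $p$ and $q$ are close enough to lie inside a common distinguished chart, the natural containment of isotropy subgroups forces the segment direction between them to lift to a single invariant vector, giving trivial $\alpha$-regularity for one of the two points. So rather than a compactness/contradiction argument, I would aim for a direct construction.

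The first step is to produce a uniform radius $R = R(\kappa, D, v, n) > 0$ such that every point of every orbifold in $\okdvn$ admits a distinguished chart of radius at least $R$ whose cover is a convex geodesic ball. This rests on two standard inputs: a uniform upper bound on the order of the isotropy group $\Gamma_p$, obtained by applying Bishop--Gromov in the manifold cover and using the global volume bound $v$; and a uniform lower bound on the injectivity radius of the cover at the $\Gamma_p$-fixed point $\tilde p$, coming from the lower sectional curvature bound together with the volume bound (and using that the deck translates of $\tilde p$ coincide with $\tilde p$, so no short geodesic loops can emanate from $\tilde p$ in the cover). I then set $r := R/3$; the precise constant is not important.

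Suppose now $d(p, q) < r$. Choose the lift $\tilde q \in \wtu_p$ closest to $\tilde p$, so that $d(\tilde p, \tilde q) = d(p, q)$. The isotropy group $\Gamma_q$ at $q$ is identified with $\mathrm{Stab}_{\Gamma_p}(\tilde q) \leq \Gamma_p$, and a distinguished chart at $q$ can be constructed by taking a $\Gamma_q$-invariant convex ball of radius $r$ around $\tilde q$ inside $\wtu_p$ and quotienting by $\Gamma_q$. Within this embedded chart, $\tilde p$ is the unique lift of $p$, because $\Gamma_q \subseteq \Gamma_p$ fixes $\tilde p$; and since $d(\tilde p, \tilde q) < r$ lies well inside the injectivity radius, the geodesic in $\wtu_p$ from $\tilde q$ to $\tilde p$ is unique. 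Consequently, every segment from $p$ to $q$ in $O$ lifts to this single geodesic, so $G_p(q)$ has exactly one element, whose lift to $T_{\tilde q}\wtu$ is the unit vector pointing from $\tilde q$ to $\tilde p$. Because $\Gamma_q$ fixes both $\tilde p$ and $\tilde q$, it preserves this vector, so the lifted set is a singleton. Taking $\tilde v$ equal to this single vector gives $\angle(\tilde v, \tilde w) = 0 < \alpha$ for every lift $\tilde w$ of $G_p(q)$, showing that $q$ is $\alpha$-regular for $d(p, \cdot)$ for any prescribed $\alpha \in (0, \tfrac{\pi}{2})$.

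The structural argument above is essentially formal once the charts are in place, and the main obstacle I anticipate is the uniform control of the distinguished chart radius $R$ in the first step. This demands care in the orbifold setting: one must bound isotropy orders uniformly via Bishop--Gromov in the cover, and then combine this with an injectivity radius estimate \emph{at the $\Gamma_p$-fixed point} $\tilde p$ (which is easier than a generic manifold injectivity radius bound, because short loops at fixed points arising from deck transformations are automatically ruled out). Once these uniform geometric bounds are in hand, the orbifold-theoretic conclusion that at least one of $p, q$ must be $\alpha$-regular follows immediately, and the argument in fact delivers any $\alpha \in (0, \tfrac{\pi}{2})$.
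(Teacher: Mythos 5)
Your proposal diverges from the argument the paper relies on (the lemma is quoted from \cite{St}, Lemma 8.2, and \cite{PrSt}, Lemma 3.1, where it is proved by a Grove--Petersen style comparison argument: one assumes both points fail to be $\alpha$-regular for each other at small distance and derives, via Toponogov comparison, a contradiction with the lower volume bound), and the route you take has a genuine gap. The crucial unproved step is your uniform radius $R(\kappa,D,v,n)$ for distinguished charts whose covers are convex geodesic balls, which you base on a ``uniform lower bound on the injectivity radius of the cover at $\tilde p$.'' The hypotheses give only a \emph{lower} sectional curvature bound, and with no upper bound neither conjugate points nor the size of the region on which $\exp_{\tilde p}$ is injective can be controlled; ruling out short deck-transformation loops at the fixed point does not address this. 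Indeed, even for manifolds (trivial isotropy) the class with $\sec\geq\kappa$, $\mathrm{diam}\leq D$, $\Vol\geq v$ has no uniform lower injectivity-radius bound --- this is precisely why Grove--Petersen--Wu finiteness requires critical point theory of distance functions rather than chart-by-chart arguments, and why the present paper must \emph{construct} uniform-size charts around singular points in Section~\ref{noncriticalorbifoldcharts} as a consequence of this very lemma. Assuming uniform-size metrically convex charts as an input is therefore both unjustified and close to circular.

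A symptom that something must be wrong is your conclusion that $G_p(q)$ is a singleton, so that $q$ is $\alpha$-regular for \emph{every} $\alpha\in(0,\tfrac{\pi}{2})$ with $r$ independent of $\alpha$. That strengthened statement is false in $\okdvn$: take (a smooth convex approximation of) the double of a planar convex region with one corner of interior angle $\beta$ slightly less than $\pi$, so the surface has curvature $\geq 0$, area bounded below, diameter bounded above, and an almost-conical point of cone angle $2\beta<2\pi$. Two points placed symmetrically near that point, at arbitrarily small mutual distance, are joined by two segments whose directions at either endpoint differ by an angle approximately $\pi-\beta>0$; hence both points fail $\alpha$-regularity for each other whenever $\alpha<\tfrac{1}{2}(\pi-\beta)$. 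So the lemma can only hold for a specific $\alpha$ determined by the geometric bounds, and any proof must use the volume bound globally (as the cited comparison argument does) rather than deducing uniqueness of segments from a local chart; the failure of your argument is exactly the unavailable injectivity/uniqueness radius bound. The Bishop--Gromov bound on the order of the isotropy group is fine, but it does not repair this.
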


\begin{remark}\label{noncriticalball}  In the proof of Proposition 8.3 in \cite{St}, Stanhope showed that if $p$ is an isolated singular point, then for any point $q$ and any $\alpha\in(0,\tfrac{\pi}{2})$, $p$ is $\alpha$-critical for $d(q, \cdot)$.   Thus if $O$ is an orbifold in $\iokdvn$, the lemma guarantees the existence of values $r$ and $\alpha$ such that if $p$ is a singular point in $O$, then all points in $B(p, r)\backslash\{p\}$ are manifold points that are $\alpha$-regular for $d(p,\cdot)$.  The same values of $r$ and $\alpha$ work for all orbifolds in $\okdvn$.
\end{remark}

\begin{lemma}\label{vectorfield}  For a point $p$ in an orbifold $O\in\iokdvn$ and values $r>0$ and $\alpha\in (0,\tfrac{\pi}{2})$, suppose that all points in $B(p,r)\backslash\{p\}$ are $\alpha$-regular for the distance function $d(p,\cdot)$.  Then there is a smooth unit vector field $V$ on $B(p,r)\backslash\{p\}$ that acts as a generalized gradient vector field for $d(p,\cdot)$.  That is, for all $x\in B(p,r)\backslash\{p\}$ and all $w_x\in G_p(x)$, $\angle(V(x), w_x)<\alpha$.  If $\sigma$ is an integral curve for $V$, then for $s<t$, 
\begin{equation*}
d(p, \sigma(t))-d(p,\sigma(s))>\cos(\alpha)(t-s).
\end{equation*}
\end{lemma}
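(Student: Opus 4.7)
The plan is to construct $V$ by patching together local smooth vector fields via a partition of unity, then to derive the integral-curve estimate from the first variation formula for the distance function $d(p,\cdot)$. First, observe that by Remark~\ref{noncriticalball} the punctured ball $B(p,r)\setminus\{p\}$ contains only manifold points, so it is an open Riemannian manifold sitting inside $O$. For each $x\in B(p,r)\setminus\{p\}$, the $\alpha$-regularity assumption hands us a unit vector $v_x\in T_xO$ with $\angle(v_x,w)<\alpha$ for every $w\in G_p(x)$.

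Next I would extend $v_x$ smoothly to a neighborhood $U_x\subset B(p,r)\setminus\{p\}$. The key analytic ingredient is upper semicontinuity of the set-valued map $y\mapsto G_p(y)$: if $y_n\to y$ and $w_n\in G_p(y_n)$ converges (inside a local trivialization of $TO$) to some $w$, then $w\in G_p(y)$, since minimizing segments from $p$ subconverge to minimizing segments by Arzel\`a--Ascoli together with completeness of $O$. Combined with compactness of $G_p(x)$ and the strict inequality $\angle(v_x,w)<\alpha$, this shows that for any smooth local extension $V_x$ of $v_x$ (for instance, parallel transport in normal coordinates centered at $x$) one still has $\angle(V_x(y),w)<\alpha$ for all $y$ in a sufficiently small $U_x$ and all $w\in G_p(y)$. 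Choose a locally finite refinement $\{U_{x_i}\}$ covering $B(p,r)\setminus\{p\}$ with subordinate smooth partition of unity $\{\psi_i\}$, and set $\tilde V=\sum_i\psi_i V_{x_i}$. For any $y$ and $w\in G_p(y)$,
\[\langle\tilde V(y),w\rangle=\sum_i\psi_i(y)\,\langle V_{x_i}(y),w\rangle>\cos\alpha,\]
while $|\tilde V(y)|\leq \sum_i\psi_i(y)=1$; hence $\tilde V(y)\neq 0$ and $\cos(\angle(\tilde V(y),w))=\langle\tilde V(y),w\rangle/|\tilde V(y)|>\cos\alpha$, so $\angle(\tilde V(y),w)<\alpha$. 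Then $V:=\tilde V/|\tilde V|$ is the desired smooth unit vector field.

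For the integral-curve estimate, let $\sigma$ be an integral curve of $V$ and set $f(t):=d(p,\sigma(t))$. Since the image of $\sigma$ lies in the manifold portion of $O$, the classical first variation formula for a distance function applies at each $\sigma(t)$ and gives the one-sided derivative
\[f'_+(t)=\min_{w\in G_p(\sigma(t))}\langle V(\sigma(t)),w\rangle>\cos\alpha.\]
Since $f$ is $1$-Lipschitz, integrating this strict lower bound on the right derivative across $[s,t]$ yields $f(t)-f(s)>\cos(\alpha)(t-s)$, as claimed.

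The main obstacle I expect is making the extension step rigorous: the vectors $w\in G_p(y)$ live in different tangent spaces as $y$ varies, so the semicontinuity of $G_p$ and the stability of the strict angle condition must be phrased inside a local trivialization of $TO$, and one has to exploit compactness of $G_p(x)\subset T_xO$ (itself a closed subset of the unit sphere) to convert a pointwise strict inequality into one that is uniform on a neighborhood. Once that is in hand, the partition-of-unity patching and the first-variation argument are essentially routine.
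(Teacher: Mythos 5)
Your proposal is correct and follows essentially the same route as the paper: the paper observes (via Remark~\ref{noncriticalball}) that $B(p,r)\backslash\{p\}$ contains no singular points and then cites the standard manifold argument (Petersen, Proposition 11.1.2), which is exactly the partition-of-unity construction and first-variation estimate you write out. The details you supply (upper semicontinuity of $G_p$, the inner-product convexity argument, and integrating the right derivative) are precisely the content of that cited proof, so there is no substantive difference.
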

\medskip

\begin{proof}
By Remark~\ref{noncriticalball}, there are no other singular points in $B(p,r)\backslash\{p\}$.  Therefore, the proof of this lemma is identical to the proof of a similar theorem for manifolds found in \cite{Pe} (Proposition 11.1.2).  
\end{proof}

\begin{proposition}\label{noncriticalchartprop}
Let $p$ be a point in an orbifold $O\in\iokdvn$, $r>0$, and $\alpha\in(0,\tfrac{\pi}{2})$.  Suppose that all points in $B(p,r)\backslash\{p\}$ are $\alpha$-regular for $d(p,\cdot)$.  Let $\ve< r$.  Then there exists an orbifold chart above $B(p,\ve)$.
\end{proposition}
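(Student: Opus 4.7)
The plan is to build the orbifold chart over $B(p, \ve)$ by starting with a small distinguished chart at $p$ and extending it outward along integral curves of the gradient-like vector field $V$ furnished by Lemma~\ref{vectorfield}. First I would fix $\delta_1 \in (0, r)$ small enough that there is a distinguished chart $(\wtu_0, \Gamma, \pi_0)$ over a ball $B(p, \delta_0)$ with $\delta_0 > \delta_1$; such charts exist at any sufficiently small scale because $p$ is an isolated singular point, and here $\Gamma$ represents the isotropy of $p$. Since $\pi_0$ restricts to a local isometry on $\wtu_0 \setminus \{\tp\}$ and $V$ is defined intrinsically from the distance function $d(p, \cdot)$, the vector field $V$ lifts canonically to a $\Gamma$-equivariant vector field $\widetilde V$ on $\wtu_0 \setminus \{\tp\}$, whose flow $\widetilde \Phi_t$ covers the downstairs flow $\Phi_t$ of $V$.

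Next I would consider the geodesic sphere $\widetilde S_1 = \pi_0^{-1}(S(p, \delta_1)) \subset \wtu_0$, which is diffeomorphic to $S^{n-1}$ and is acted on by $\Gamma$. The bound $d(p, \Phi_t(x)) - d(p, x) \geq \cos(\alpha)\, t$ from Lemma~\ref{vectorfield} guarantees that every point of the annular region $\{y : \delta_1 < d(p, y) < \ve\}$ is reached from $S(p, \delta_1)$ by $\Phi_t$ in bounded time, and along any integral curve the distance to $p$ is strictly monotone, so distinct flow lines cannot cross. Therefore the map $\Psi(x, t) = \Phi_t(x)$ is a diffeomorphism from $S(p, \delta_1) \times [0, T_*)$ onto an open set containing this annulus, for an appropriate $T_*$.

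I would then assemble $\wtu$ by gluing the inner piece $\wtu_{\mathrm{in}} = \{\tilde y \in \wtu_0 : d(\tp, \tilde y) < \delta_1 + \eta\}$ to the outer piece $\wtu_{\mathrm{out}} = \widetilde S_1 \times (-\eta, T_*)$ along the identification $(\tilde x, t) \sim \widetilde \Phi_t(\tilde x)$, valid for $|t| < \eta$ provided $\eta$ is small enough to keep the lifted flow inside $\wtu_0$ and away from $\tp$. The resulting space is a topological $n$-manifold, specifically a cone on $\widetilde S_1 \cong S^{n-1}$, hence homeomorphic to an open ball in $\mathbb{R}^n$, which I take as the Euclidean realization of $\wtu$. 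The $\Gamma$-action extends to $\wtu_{\mathrm{out}}$ by $\gamma \cdot (\tilde x, t) = (\gamma \tilde x, t)$, consistent across the overlap because $\widetilde V$ is $\Gamma$-equivariant. Define $\pi$ to equal $\pi_0$ on $\wtu_{\mathrm{in}}$ and $(\tilde x, t) \mapsto \Phi_t(\pi_0(\tilde x))$ on $\wtu_{\mathrm{out}}$; these agree on the overlap because $\pi_0 \circ \widetilde \Phi_t = \Phi_t \circ \pi_0$, yielding a continuous $\Gamma$-invariant surjection onto $B(p, \ve)$ after choosing $T_*$ so that the flow just reaches $S(p, \ve)$.

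The main obstacle is showing that $\pi$ induces a homeomorphism $\wtu / \Gamma \to B(p, \ve)$, which reduces to injectivity modulo $\Gamma$ on the outer piece: if $(\tilde x, t)$ and $(\tilde y, s)$ map to the same downstairs point, then strict monotonicity of $d(p, \cdot)$ along a single flow line forces $s = t$, and then $\pi_0(\tilde x) = \pi_0(\tilde y)$ forces $\tilde y = \gamma \tilde x$ for some $\gamma \in \Gamma$. A secondary technical issue is making the chart cover exactly $B(p, \ve)$, which I would handle by reparametrizing the outer coordinate by distance from $p$ (using that $d(p, \Phi_t(x))$ is a strictly increasing smooth function of $t$ along each flow line), or equivalently by restricting $\wtu_{\mathrm{out}}$ to the open set $\{(\tilde x, t) : d(p, \Phi_t(\pi_0(\tilde x))) < \ve\}$.
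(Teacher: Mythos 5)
Your overall strategy is the same as the paper's in spirit: a distinguished chart at $p$ provides the group $\Gamma$ and the local model near the singular point, and the flow of the gradient-like field $V$ from Lemma~\ref{vectorfield} transports this model out to radius $\ve$, with injectivity coming from strict monotonicity of $d(p,\cdot)$ along flow lines. (The paper implements this without any gluing: it takes $\widetilde U$ to be the lift to $T_{\tp}\widetilde W$ of the star-shaped set $\{tv : v\in S_pO,\ 0\le t<t_v\}\subset T_pO$, defines $\pi_U(t\tilde v)=\sigma_{q(\tilde v)}(t)$, and checks directly that the induced map to $B(p,\ve)$ is a homeomorphism by factoring it through a radial flow, $\exp_p$, and the flow of $V$.)

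However, the gluing step as you wrote it fails: identifying $\wtu_{\mathrm{in}}=\{d(\tp,\cdot)<\delta_1+\eta\}$ with $\widetilde S_1\times(-\eta,T_*)$ only along the band $|t|<\eta$ produces a non-Hausdorff space. Indeed, since the flow increases distance at a rate strictly between $\cos\alpha$ and $1$, one generally has $d(\tp,\widetilde\Phi_\eta(\tilde x))<\delta_1+\eta$, so the point $a=\widetilde\Phi_\eta(\tilde x)\in\wtu_{\mathrm{in}}$ and the point $b=(\tilde x,\eta)\in\wtu_{\mathrm{out}}$ are distinct, unidentified points of the pushout, yet the sequence $\widetilde\Phi_{t_k}(\tilde x)\sim(\tilde x,t_k)$ with $t_k\uparrow\eta$ converges to both; no choice of $\eta$ helps, because the defect lies in the shape of the overlap, not its size. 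The repair is to glue along the full overlap $\{(\tilde x,t):\widetilde\Phi_t(\tilde x)\in\wtu_{\mathrm{in}}\}$ (equivalently, to parametrize everything at once by the tangent space, as the paper does). Two further points are asserted rather than proved and would need attention even after that fix: the chart definition requires $\wtu$ to be an open subset of $\mathbb{R}^n$, so "a cone on $S^{n-1}$, hence a ball" needs an actual homeomorphism (the paper gets this for free by building $\widetilde U$ inside $T_{\tp}\widetilde W$); and showing $\pi$ induces a homeomorphism $\wtu/\Gamma\to B(p,\ve)$ does not reduce to injectivity modulo $\Gamma$ alone — a continuous bijection need not have continuous inverse, which is exactly the point the paper handles by decomposing $\bar\pi_U$ into invertible continuous maps.
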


\begin{definition}
Charts constructed in the proof of Proposition~\ref{noncriticalchartprop} are called \textit{noncritical charts}.
\end{definition}

\begin{proof}[Proof of Proposition~\ref{noncriticalchartprop}]
Let $U = B(p,\ve)$ and let $\ccu$ denote the orbifold chart that we will construct.

By Lemma~\ref{vectorfield}, there exists a unit vector field $V$ on $B(p,r)\backslash\{p\}$ that acts as a generalized gradient vector field for $d(p,\cdot)$.   For $v\in S_pO$ and for small $t$, $\exp_p(tv)$ is a length-minimizing geodesic.  Thus, near $p$, integral curves of $V$ are given by $t\mapsto \exp_p(tv)$.  Furthermore, there is a value $\delta$ such that $\exp_p$ gives a homeomorphism from $B(O,\delta)\subset T_pO$ to $B(p,\delta)\subset O$.  Therefore, each $v\in S_pO$ gives rise to a unique integral curve $\sigma_v(t)$, equal to $\exp_p(tv)$ on $B(p,\delta)$.   For each such curve, $\lim_{t\to 0}\dot\sigma_v(t) = v$.

For $0<s<t$, Lemma~\ref{vectorfield} says that $d(p,\sigma_v(t))-d(p,\sigma_v(s))>\cos(\alpha)(t-s)$.  For $s$ close to $0$, $d(p,\sigma_v(s))=s$.  Therefore $d(p,\sigma_v(t)) - s(1-\cos(\alpha))>\cos(\alpha)t$.    Since $\alpha\in(0,\tfrac{\pi}{2})$, $1-\cos(\alpha)$ is positive.  This implies that $d(p,\sigma_v(t))>\cos(\alpha)t$.

By above, for each $v\in S_pO$ there is a value $t_v\in (\ve, (\cos(\alpha))^{-1}\ve)$ when $\sigma_v(t)$ first leaves $B(p,\ve)$.   This value varies continuously with $v$.

Let $\overline{U}\subset T_pO$ be the open neighborhood of $0$ defined by
\begin{equation*}
\overline{U} = \{tv\ \vert \ v\in S_pO\textrm{ and } 0\leq t< t_v\}.
\end{equation*}

Suppose that $\ccw$ is a distinguished orbifold chart above a neighborhood $W$ of $p$.  Since we can identify $T_pO$ with $T_{\tilde p}\wtw/\Gamma_{W*}$, consider $\overline{U}$ a subset of  $T_{\tilde p}\wtw/\Gamma_{W*}$ and lift to a set $\widetilde U\subset T_{\tilde p}\wtw$.  Because this is a lift, the action of $\Gamma_{W*}$ on $T_{\tilde p}\wtw$ restricts to an action on $\widetilde U$.   Let $\Gamma_U=\Gamma_{W*}$.  

Let $q: T_{\tilde p}\wtw\to T_pO$ denote the quotient map.  Define $\pi_U: \widetilde{U}\to B(p,\ve)$ by
\begin{align*}
\pi_U(0) &= p\\
\pi_U(t\tilde v) &= \sigma_{q(\tilde v)}(t).
\end{align*}
where $\tilde v$ is in $S_{\tilde p}\wtw$ and $0<t<t_{q(\tilde v)}$.

In order to conclude that $(\widetilde U, \Gamma_U, \pi_U)$ is an orbifold chart, we need to show that $\pi_U$ induces a homeomorphism $\bar{\pi}_U$ between $\overline{U} =q(\widetilde U)$ and $B(p,\ve)$.  Note that this will imply that $\pi_U$ is continuous since $\pi_U=\bar\pi_U\circ q$ and $q$ is continuous.

The map $\bar\pi_U: \overline{U}\to B(p,\ve)$ is defined by
\begin{align*}
\bar\pi_U(0) &= p\\
\bar\pi_U(tv) &=\sigma_v(t),
\end{align*}
where $v\in S_pO$ and $0<t<t_v$.  
On the neighborhood $B(p,\delta)$ of $p$, $\bar\pi_U=\exp_p$, so it is a local homeomorphism.   For $x\in B(p,\ve)\backslash B(p,\delta)$, suppose that $\sigma$ is the integral curve of $V$ through $x$.  Since distance along $\sigma$ increases proportional to a change in parameter, there is a value $s$ such that $B(p,\sigma(s))<\delta$.  By uniqueness of integral curves, $\sigma=\sigma_v$ for some $v\in S_pO$.  Therefore $\bar\pi_U$ is onto.  The map is one-to-one because integral curves do not intersect.    Finally, $\bar\pi_U$ is continuous on $\overline U\backslash B(0,\delta)$ as follows.  Since there are no singular points in $B(p,\ve)\backslash\{p\}$, $B(p,\ve)\backslash\{p\}$ is a manifold and $V$ is a smooth vector field on $B(p,\ve)\backslash\{p\}$.  The flow of a smooth vector field on a manifold acts by diffeomorphisms.  The map $\bar\pi_U$ can be decomposed into a radial flow towards $0$ in $\overline U$, followed by $\exp_p$, followed by a flow along $V$ away from $p$.   Provided the image of the radial flow in $\overline U$ is contained in $B(0,\delta)$, $\bar\pi_U$ is composed of continuous invertible maps, and therefore is continuous with continuous inverse itself.  
\end{proof}

\begin{corollary}\label{universalnoncriticalchart}
There is a universal value $\ve>0$ such that for any $O\in\iokdvn$ and singular point $p$ in $O$, there is an orbifold chart $\ccu$ above $B(p,\ve)$.  
\end{corollary}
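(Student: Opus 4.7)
The plan is to observe that the corollary is essentially an immediate packaging of Lemma~\ref{technicallemma}, Remark~\ref{noncriticalball}, and Proposition~\ref{noncriticalchartprop}, and the only content to check is that the radius is genuinely universal across the whole class $\iokdvn$.

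First I would invoke Lemma~\ref{technicallemma} to extract values $r>0$ and $\alpha\in(0,\tfrac{\pi}{2})$ depending only on $\kappa$, $D$, $v$, and $n$ (and not on any particular orbifold $O$) with the stated $\alpha$-regularity dichotomy on pairs of points of distance less than $r$. Next I would appeal to Remark~\ref{noncriticalball}: for any orbifold $O\in\iokdvn$ and any singular point $p\in O$, the fact that $p$ is an isolated singular point forces $p$ to be $\alpha$-critical for $d(q,\cdot)$ for every $q$, so the dichotomy in Lemma~\ref{technicallemma} must resolve in the other direction. Thus every $q\in B(p,r)\setminus\{p\}$ is necessarily a manifold point which is $\alpha$-regular for $d(p,\cdot)$. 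Crucially, the same $r$ and $\alpha$ work uniformly over the whole collection $\iokdvn$.

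With this hypothesis now verified uniformly, I would fix once and for all $\ve = r/2$, say (any $\ve<r$ will do), and apply Proposition~\ref{noncriticalchartprop} to produce an orbifold chart $\cc$ above $B(p,\ve)$ for each singular point $p$ in each orbifold $O\in\iokdvn$. Since $r$ depends only on $\kappa$, $D$, $v$, $n$, so does $\ve$, and this is exactly the universality asserted by the corollary.

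There is no real obstacle here; the work of the section has all been done in the preceding results. The only point that might deserve a sentence of emphasis in the write-up is that the universality of $\ve$ is inherited directly from the universality of $r$ and $\alpha$ in Lemma~\ref{technicallemma}, together with the fact that restricting to $\iokdvn$ (isolated singularities) is precisely what allows Remark~\ref{noncriticalball} to apply uniformly.
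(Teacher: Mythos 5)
Your proposal is correct and follows exactly the paper's own route: the paper proves this corollary in one line by citing Lemma~\ref{technicallemma}, Remark~\ref{noncriticalball}, and Proposition~\ref{noncriticalchartprop}, which is precisely the chain you spell out, with the universality of $\ve$ inherited from the universality of $r$ and $\alpha$. Nothing is missing.
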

\begin{proof}
This follows directly from Lemma~\ref{technicallemma}, Remark~\ref{noncriticalball}, and Proposition~\ref{noncriticalchartprop}.
\end{proof}

\vspace{5mm}
\section{A finite partition of $\iokdvn$.}\label{finitepartition}

We begin the process of showing that $\iokdvn$ contains orbifolds of only finitely many orbifold homeomorphism types by partitioning $\iokdvn$ into a finite number of subcollections based on the singular sets of the orbifolds.  Since there are only finitely many such subcollections, we will ultimately restrict our proof to an individual subcollection, which we will label $\mathcal{P}$.

From \cite{St}, we have the following two theorems.

\begin{theorem}\label{liz_singularpoints}
There is an upper bound, based only on $\kappa$, $D$, $v$, and $n$, on the number of singular points in any orbifold in the collection $\iokdvn$.
\end{theorem}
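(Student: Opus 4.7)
The plan is to leverage Corollary~\ref{universalnoncriticalchart} to show that singular points must be uniformly separated, and then use a Bishop--Gromov packing argument to bound their total number.

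First, I would invoke Corollary~\ref{universalnoncriticalchart} to obtain a value $\varepsilon>0$, depending only on $\kappa$, $D$, $v$, and $n$, with the property that every singular point $p$ of every orbifold $O\in\iokdvn$ admits a noncritical orbifold chart above $B(p,\varepsilon)$. Combined with Remark~\ref{noncriticalball}, this tells us that $B(p,\varepsilon)\setminus\{p\}$ contains only manifold points. Consequently, if $p_1$ and $p_2$ are distinct singular points of $O$, they must satisfy $d(p_1,p_2)\geq \varepsilon$; otherwise one of them would lie in the punctured noncritical neighborhood of the other and would have to be a manifold point. It follows that the open metric balls $B(p_i,\varepsilon/2)$ centered at the distinct singular points $p_1,\dots,p_N$ of $O$ are pairwise disjoint subsets of $O$.

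Next I would apply the Bishop--Gromov volume comparison, which is valid for Alexandrov spaces of curvature bounded below (and in particular for our orbifolds, viewed as Alexandrov spaces as discussed in Section~\ref{alexandrovspaces}). Letting $v_\kappa(r)$ denote the volume of the ball of radius $r$ in the $n$-dimensional model space of constant curvature $\kappa$, the relative volume inequality applied to each $p_i$ with radii $\varepsilon/2$ and $D$ yields
\begin{equation*}
\Vol\bigl(B(p_i,\varepsilon/2)\bigr) \;\geq\; \frac{v_\kappa(\varepsilon/2)}{v_\kappa(D)}\,\Vol\bigl(B(p_i,D)\bigr) \;\geq\; \frac{v_\kappa(\varepsilon/2)}{v_\kappa(D)}\,v,
\end{equation*}
where in the last step we use that $B(p_i,D)$ contains all of $O$ since $\operatorname{diam}(O)\leq D$. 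Likewise Bishop--Gromov gives the global upper bound $\Vol(O)\leq v_\kappa(D)$.

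Since the balls $B(p_i,\varepsilon/2)$ are disjoint and contained in $O$, summing the lower bounds produces
\begin{equation*}
N\cdot\frac{v_\kappa(\varepsilon/2)}{v_\kappa(D)}\,v \;\leq\; \sum_{i=1}^N \Vol\bigl(B(p_i,\varepsilon/2)\bigr) \;\leq\; \Vol(O) \;\leq\; v_\kappa(D),
\end{equation*}
and hence $N\leq v_\kappa(D)^2/\bigl(v\,v_\kappa(\varepsilon/2)\bigr)$, a quantity that depends only on $\kappa$, $D$, $v$, and $n$. The main technical point is really already done for us by Corollary~\ref{universalnoncriticalchart}: the key nontrivial input is the universal lower bound on the separation of singular points, after which the counting is a standard volume-packing argument. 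The only subtlety to verify is that Bishop--Gromov applies to Riemannian orbifolds in the form stated, which is immediate from their identification with $n$-dimensional Alexandrov spaces of curvature $\geq\kappa$.
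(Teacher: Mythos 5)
Your argument is correct, but it is worth noting that the paper itself does not prove this statement at all: Theorem~\ref{liz_singularpoints} is simply imported from Stanhope's paper \cite{St}, so your proposal is a genuinely self-contained alternative built from the paper's own Section~\ref{noncriticalorbifoldcharts} machinery. In fact it reconstructs what is, in spirit, Stanhope's argument: Remark~\ref{noncriticalball} (isolated singular points are $\alpha$-critical for every distance function, while Lemma~\ref{technicallemma} forbids two mutually critical points within distance $r$) is exactly what forces the uniform separation $d(p_1,p_2)\geq\ve$, and the rest is the standard packing count; note there is no circularity, since Corollary~\ref{universalnoncriticalchart} and Remark~\ref{noncriticalball} are established independently of Theorem~\ref{liz_singularpoints}. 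Two small refinements: (i) since $\operatorname{diam}(O)$ may equal $D$ exactly, the open ball $B(p_i,D)$ need not literally contain all of $O$; either use a radius slightly larger than $D$, closed balls, or observe that the sphere $\{x : d(p_i,x)=D\}$ has measure zero, so $\Vol(B(p_i,D))\geq v$ still holds; (ii) the relative Bishop--Gromov inequality actually gives the cleaner bound $\Vol(B(p_i,\ve/2))\geq \frac{v_\kappa(\ve/2)}{v_\kappa(D)}\Vol(O)$, whence $N\leq v_\kappa(D)/v_\kappa(\ve/2)$ directly, so the explicit lower volume bound $v$ is not needed in the counting step at all (it enters only through the dependence of $\ve$ on $v$ via Lemma~\ref{technicallemma}). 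The volume comparison you invoke is indeed available in this setting, either for $n$-dimensional Alexandrov spaces with curvature bounded below or, more concretely, via Borzellino's relative volume comparison for Riemannian orbifolds \cite{Bz}, since sectional curvature $\geq\kappa$ gives Ricci curvature $\geq(n-1)\kappa$ on local covers.
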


\begin{theorem}\label{liz_isotropy}
Suppose $\kappa\in\mathbb{R}$.  Let $\mathcal{S}$ be the collection of closed $n$-dimensional orbifolds with a lower bound $\kappa(n-1)$ on Ricci curvature, an upper bound $D>0$ on diameter, and a lower bound $v>0$ on volume.  There is an upper bound, based only on $\kappa$, $D$, $v$, and $n$, on the number of possible isotropy types, up to isomorphism, for points in an orbifold in $\mathcal{S}$.
\end{theorem}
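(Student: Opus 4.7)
The strategy is to prove a uniform upper bound $N = N(\kappa, D, v, n)$ on the order of the isotropy group $\Gamma_p$ at every point $p$ of every $O \in \mathcal{S}$. Since for each positive integer there are only finitely many isomorphism classes of finite groups of that order, such an order bound immediately implies the claimed bound on the number of isotropy types.

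To bound $|\Gamma_p|$, fix a point $p$ and a distinguished orbifold chart $(\wtu, \Gamma_p, \pi)$ over a geodesic ball $B(p, r_0)$, so that $\Gamma_p$ fixes a lift $\tilde p$ and acts on $\wtu$ by Riemannian isometries. For each non-identity $\gamma \in \Gamma_p$, the derivative $\gamma_*$ has a proper fixed subspace in $T_{\tilde p}\wtu$, whose image under $\exp_{\tilde p}$ is the local fixed set of $\gamma$; this is a proper totally geodesic submanifold and hence has measure zero. It follows that $\pi:\wtu \to B(p,r_0)$ is $|\Gamma_p|$-to-one off a null set, so for every $r < r_0$,
\[
\Vol(B(\tilde p, r)) \;=\; |\Gamma_p|\cdot\Vol(B(p, r)).
\]
Since $\wtu$ is a Riemannian manifold with $\mathrm{Ric} \geq \kappa(n-1)$, classical Bishop--Gromov on $\wtu$ yields $\Vol(B(\tilde p, r))/V_\kappa(r) \to 1$ as $r \to 0^+$, where $V_\kappa(r)$ denotes the volume of a radius-$r$ geodesic ball in the $n$-dimensional simply connected space form of constant curvature $\kappa$. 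Combining the two displayed facts,
\[
\lim_{r \to 0^+}\frac{\Vol(B(p,r))}{V_\kappa(r)} \;=\; \frac{1}{|\Gamma_p|}.
\]

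The key remaining ingredient is the orbifold version of Bishop--Gromov: $r \mapsto \Vol(B(p, r))/V_\kappa(r)$ is non-increasing on $(0, D]$. This is a standard extension of the manifold statement, since the Jacobi field comparison along a radial geodesic from $p$ is a local estimate that is unaffected by passing to a local quotient by $\Gamma$, and the singular locus contributes nothing to volume. Because $\mathrm{diam}(O) \leq D$ forces $B(p, D) = O$, monotonicity applied at $r = D$ gives
\[
\frac{1}{|\Gamma_p|} \;=\; \lim_{r \to 0^+}\frac{\Vol(B(p,r))}{V_\kappa(r)} \;\geq\; \frac{\Vol(O)}{V_\kappa(D)} \;\geq\; \frac{v}{V_\kappa(D)},
\]
so $|\Gamma_p| \leq V_\kappa(D)/v$, a bound depending only on the stated parameters. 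The main technical point is carefully verifying the orbifold Bishop--Gromov inequality (and, when $\kappa > 0$, silently using the Myers restriction $D \leq \pi/\sqrt{\kappa}$ so that $V_\kappa(D)$ makes sense); once this is in hand, the order bound combined with the elementary finiteness of isomorphism classes of groups of bounded order completes the proof.
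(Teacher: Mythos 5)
Your argument is correct, and it is essentially the proof behind the statement: the paper itself gives no argument, deferring to Stanhope \cite{St}, where the bound is obtained exactly this way --- a uniform bound on $|\Gamma_p|$ from comparing the volume of small balls (which behave like $V_\kappa(r)/|\Gamma_p|$ because the chart projection is $|\Gamma_p|$-to-one off the measure-zero singular set) against $\Vol(O)\geq v$ via relative volume comparison, followed by the observation that there are only finitely many isomorphism classes of groups of bounded order. The one step you should not wave through as a "standard local" remark is the orbifold Bishop--Gromov monotonicity itself: radial geodesics and the cut locus can meet the singular set, so this requires a genuine argument, but it is available in the literature (Borzellino \cite{Bz}) and citing it closes the gap.
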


We note that in \cite{St}, the second result is stated for an isospectral set of orbifolds having a uniform lower bound on Ricci curvature.  However, by \cite{DGGW}, all orbifolds in an isospectral set have the same dimension and volume.  In her paper, Stanhope proved that there is a upper bound on diameter for orbifolds in an isospectral set with a uniform lower bound on Ricci curvature.  The actual proof of the result in \cite{St} relies only on the bounds on Ricci curvature, diameter, volume, and dimension.  Thus, the proof of Theorem \ref{liz_isotropy} is the same as the proof of the corresponding result in \cite{St}.  

Since a lower bound on sectional curvature implies a lower bound on Ricci curvature, Theorem~\ref{liz_isotropy} implies that there are only finitely many possible isotropy types for singular points in orbifolds in $\okdvn$.

Using Theorems \ref{liz_singularpoints} and \ref{liz_isotropy}, we partition $\iokdvn$ into a finite number of subcollections so that all orbifolds in a given subcollection $\mathcal{P}$ have the same number and isotropy types of singular points.  For any orbifold in a fixed subcollection $\mathcal{P}$ and for any isotropy group $\Gamma$ associated with a point $p$ in the orbifold, there is an orthogonal group action by the corresponding group of differentials $\Gamma_*$ on the tangent space of the manifold cover $\wtu$ over $p$.   Since $\Gamma$ is finite, and since any finite group acting on $n$-dimensional space for fixed $n$ admits only finitely many representations, we may also assume that if $O_1$ and $O_2$ are two orbifolds in $\mathcal{P}$, with corresponding singular points $p_1$ and $p_2$, the actions of $\Gamma_*$ on the tangent spaces of $\wtu_1$ and $\wtu_2$ are isomorphic.

\vspace{5mm}
\section{An orbifold structure on a limit space}\label{limitisanorbifold}

Let $\mathcal{P}$ be a subcollection of $\iokdvn$ described in Section~\ref{finitepartition}.  Suppose that each orbifold in $\mathcal{P}$ has $m$ singular points, with associated isotropy groups $\Gamma^1,\Gamma^2,\dots,\Gamma^m$.  By Gromov's compactness theorem, the sequence $\{O_i\}$ has a convergent subsequence.  Pass to the subsequence and let $X$ denote the limit space.  Since there is a universal lower bound on the volumes of the $O_i$, there is no collapsing in the limit, so $\dim X=n$  (\cite{BBI}, Theorem 10.10.10).  Since each $O_i$ is compact, so is $X$ (\cite{BBI}, p.264).  We will prove that there is an orbifold atlas on $X$.  We begin with the following lemma.

\begin{lemma}\label{convergingpoints}
Let $X_i$ be a sequence of metric spaces converging in the Gromov-Hausdorff metric to a compact space $X$.  Suppose that for each $i$, $p_i$ is a point in $X_i$.  Then a subsequence of $\{p_i\}$ converges to a point in $X$.
\end{lemma}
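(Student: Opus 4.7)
The plan is to exploit Remark~\ref{overallmetric}, which lets us realize the Gromov-Hausdorff convergence $X_i \to X$ as honest Hausdorff convergence inside a single ambient metric space. Once everything lives in one space, the problem reduces to a standard compactness argument, and the role of the compactness hypothesis on $X$ becomes transparent.

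More concretely, I would first invoke Remark~\ref{overallmetric} to fix a metric $d$ on the disjoint union $Z = \left(\bigsqcup_i X_i\right) \sqcup X$ in which $X_i \to X$ in the Hausdorff metric $d_H^Z$. This immediately gives a sequence $\ve_i \to 0$ such that $X_i \subset B(X, \ve_i)$ for every $i$. Applied to the given points $p_i \in X_i$, this produces points $q_i \in X$ with $d(p_i, q_i) < \ve_i$.

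Next, because $X$ is compact, the sequence $\{q_i\} \subset X$ admits a subsequence $q_{i_k}$ converging in $X$ to some $q \in X$. The triangle inequality in $Z$ then gives
\begin{equation*}
d(p_{i_k}, q) \leq d(p_{i_k}, q_{i_k}) + d(q_{i_k}, q) < \ve_{i_k} + d(q_{i_k}, q),
\end{equation*}
and both terms tend to zero. Hence $p_{i_k} \to q$ in $Z$, which is precisely what it means, under the convention supplied by Remark~\ref{overallmetric}, for the subsequence $\{p_{i_k}\}$ to converge to the point $q$ of $X$.

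There is really no serious obstacle here; the only conceptual step is the one of transporting a question about points in varying spaces to a question about points in a single space, and that is exactly what the remark provides. The proof is essentially Bolzano--Weierstrass in disguise, with compactness of $X$ doing all the work once the common ambient space is in hand.
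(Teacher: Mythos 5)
Your proposal is correct and follows essentially the same route as the paper: embed everything in the disjoint union via Remark~\ref{overallmetric}, pick nearby points $q_i\in X$, extract a convergent subsequence by compactness of $X$, and conclude with the triangle inequality. If anything, your use of an explicit sequence $\ve_i\to 0$ is slightly cleaner than the paper's fixed-$\ve$ bookkeeping, but the argument is the same.
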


\begin{proof}
If $X_i$ converges in the Gromov-Hausdorff metric to $X$, then there exists a metric on $X_i\sqcup_i X$ such that $X_i$ Hausdorff converges to $X$ in $X_i\sqcup_i X$ (see Remark~\ref{overallmetric}).  Thus for all $\varepsilon>0$, there exists $N$ such that $X\subset B(X_i,\varepsilon)$ and $X_i\subset B(X,\varepsilon)$ in $X_i\sqcup_i X$ for all $i\geq N$.  This implies that for large $i$, there exists $x_i\in X$ such that $d(p_i, x_i) <\varepsilon$.

Consider the sequence $\{x_i\}\subset X$.  $X$ is compact, so there is a convergent subsequence of $\{x_i\}$, converging to a point $x\in X$.  Relabel and call the subsequence $\{x_i\}$.   Now, by choosing $i$ large enough, both $d(p_i,x_i)<\ve$ and $d(x_i, x)<\ve$. Therefore,
\begin{equation*}
d(p_i, x)\leq d(p_i,x_i) + d(x_i, x) < \ve + \ve = 2\varepsilon.
\end{equation*}
Thus the sequence $\{p_i\}$ converges to $x$.  
\end{proof}

By applying Lemma~\ref{convergingpoints} $m$ times and passing repeatedly to subsequences, we may assume that each sequence of matched singular points $\{p_i\}$ in our sequence $\{O_i\}$ converges to a point $p$ in the limit space $X$.

Next, we record the following lemma (\cite{P2}, \cite{PP}, \cite{K2}, \cite{K}). 

\begin{lemma}\label{convexfunction}
Suppose $\{X_i\}\subset \akdvn$ is a sequence of $n$-dimensional Alexandrov spaces that Gromov-Hausdorff converges to an Alexandrov space $X$.  Let $p\in X$ and suppose that $\{p_i\in X_i\}$ converges to $p$.  Then there is a value $\delta> 0$, a 1-Lipschitz function $f:X\to\mathbb{R}$, and a sequence of 1-Lipschitz functions $f_i:X_i\to\mathbb{R}$ converging uniformly to $f$ such that $f$ is strictly convex in $B(p,\delta)$ and, for large $i$, $f_i$ is strictly convex on $B(p_i,\delta)$.  Furthermore, $f$ has a strict local minimum value of $0$ at $p$.  
\end{lemma}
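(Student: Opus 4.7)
The plan is to build $f$ by a Perelman--Petrunin-style averaging of modified distance functions from points surrounding $p$, then transfer the same construction to each $X_i$ via the Gromov--Hausdorff convergence. First I would work in $X$ alone. Pick a small radius $r>0$ and select points $q_1,\dots,q_N\in X$ at distance approximately $r$ from $p$ so that the initial directions of segments from $p$ to the $q_j$ form an $\varepsilon$-net in the space of directions $\Sigma_p$ (possible since $\Sigma_p$ is a compact metric space of diameter at most $\pi$). Fix a smooth, $1$-Lipschitz function $\phi:[0,\infty)\to\mathbb{R}$ which is strictly concave on an interval containing $[r/2,3r/2]$ and essentially linear outside it, and set
\[
\tilde{f}(x) \;=\; -\frac{1}{N}\sum_{j=1}^N \phi\bigl(d(q_j,x)\bigr).
\]
Each summand is $1$-Lipschitz, so $\tilde f$ is $1$-Lipschitz. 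Toponogov's comparison theorem on an Alexandrov space with curvature $\ge\kappa$ yields a quantitative concavity estimate for each $\phi\circ d(q_j,\cdot)$ along geodesics that pass through a sufficiently small ball $B(p,\delta)$, and since the directions to the $q_j$ cover $\Sigma_p$ at scale $\varepsilon$, the average $\tilde f$ is strictly convex on $B(p,\delta)$ and attains a strict local minimum at $p$. Setting $f=\tilde f-\tilde f(p)$ gives the required normalization $f(p)=0$.

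Next, I would construct the $f_i$. Applying Lemma~\ref{convergingpoints} once for each $j$ produces points $q_{i,j}\in X_i$ with $q_{i,j}\to q_j$. Define
\[
f_i(x) \;=\; -\frac{1}{N}\sum_{j=1}^N \phi\bigl(d(q_{i,j},x)\bigr) - c_i,
\]
where $c_i$ is the constant normalizing $f_i(p_i)=0$. Each $f_i$ is $1$-Lipschitz by the same argument as $f$. Using the realization of Gromov--Hausdorff convergence inside a common ambient metric (Remark~\ref{overallmetric}) together with the uniform continuity of $\phi$, the distance functions $d(q_{i,j},\cdot)$ approximate $d(q_j,\cdot)$ to within $\beta$-Hausdorff error tending to zero, and one extracts $f_i\to f$ uniformly.

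The main obstacle is the uniform strict convexity of $f_i$ on $B(p_i,\delta)$ for large $i$: the same radius $\delta$ must work simultaneously for $f$ and for all sufficiently advanced members of the sequence. The Toponogov concavity estimate underlying strict convexity of $\tilde f$ depends only on the lower curvature bound $\kappa$ shared by every $X_i$, so it applies on each $X_i$ with identical quantitative bounds. What remains is to verify that the $\varepsilon$-net condition on directions persists: since the pairwise distances in the tuple $(p_i,q_{i,1},\dots,q_{i,N})$ converge to those of $(p,q_1,\dots,q_N)$, the comparison angles at $p_i$ computed from triangle side-lengths converge to the comparison angles at $p$, and Alexandrov's angle-monotonicity promotes this to the required net condition in $\Sigma_{p_i}X_i$ for all sufficiently large $i$ with the same $\varepsilon$. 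This angle-transfer step, rather than the convexity argument itself, is the delicate point.
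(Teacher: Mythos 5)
Your outline follows the same route as the sources the paper itself leans on (Perelman's averaged distance functions \cite{P2}, lifted to the approximating spaces as in \cite{PP}, \cite{K2}, \cite{K}); the paper does not reprove the construction but cites it, so you are in effect reconstructing the cited proof. However, as written there is a genuine gap at the clause ``$f$ has a strict local minimum value of $0$ at $p$.'' With $q_1,\dots,q_N$ at distance roughly $r$ from $p$ and the directions $\uparrow_p^{q_j}$ merely forming an $\varepsilon$-net in $\Sigma_p$, the first variation formula gives, for a unit-speed geodesic $\gamma$ leaving $p$ in direction $\xi$, the one-sided derivative $(f\circ\gamma)'(0^+)=\tfrac1N\sum_j\phi'(d(p,q_j))\cos\angle(\xi,\uparrow_p^{q_j})$, and nothing in the net condition makes this nonnegative for every $\xi$: already in $\mathbb{R}^n$, if the unit vectors $v_j$ pointing to the $q_j$ have $\sum_j v_j=w\neq 0$, then in the direction $\xi=-w/\vert w\vert$ your $f$ strictly decreases to first order, so the minimum of the strictly convex function sits at a nearby point different from $p$. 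Strict convexity cannot repair this, and the clause is load-bearing: in the proof of Theorem~\ref{limitorbifold} it is exactly the fact that the minimum of $f$ is at $p$ that identifies $p$ as the cone point of $\{f<\varepsilon\}$. In Perelman's construction this is handled by taking a genuine (measure-weighted) average of distance functions and using volume comparison in the space of directions to force all directional derivatives at $p$ to be nonnegative; a finite $\varepsilon$-net with unspecified weights does not achieve it, so you need that device or an explicit argument producing a suitably balanced configuration.

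Two further steps are thinner than you acknowledge. First, an individual $\phi\circ d(q_j,\cdot)$ is not concave along geodesics nearly orthogonal to the segment to $q_j$ (there the comparison bound on the second derivative of $d(q_j,\cdot)$ is positive, of size about $\phi'(r)/r$), so strict convexity of the average needs the counting argument: for each $x\in B(p,\delta)$ and each direction at $x$, only a controlled fraction of the $q_j$ may be nearly orthogonal, which requires the directions to be chosen separated (not just a net), $\delta\ll\varepsilon r$, a packing bound in $\Sigma_x$, and a choice of $\phi$ with $\vert\phi''\vert$ large compared with $\phi'/r$ so that the strictly concave contributions dominate. Second, your transfer to $X_i$ via ``the required net condition in $\Sigma_{p_i}X_i$'' is not the right mechanism: spaces of directions are not continuous under Gromov--Hausdorff convergence, so closeness of the configurations does not yield a net in $\Sigma_{p_i}$. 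The stable data are the mutual distances and comparison angles of $(p_i,q_{i,1},\dots,q_{i,N})$ and of points of $B(p_i,\delta)$, and the convexity estimate must be phrased entirely in those terms, which is precisely how the arguments in \cite{K2} (Lemma 4.2) and \cite{K} (Lemma 3.1) are organized.
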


The existence of the function $f$ was originally proven in \cite{P2} (Lemma 3.6).  Perelman constructed $f$ as a generalized average of distance functions and gave an argument that $f$ is 1-Lipschitz and is strictly concave in a neighborhood of radius $\delta$ about $p$.   (Note that $f$ is strictly concave if and only if $-f$ is strictly convex.  Thus, while the statements given \cite{P2}, \cite{PP}, and \cite{K2} concern strictly concave functions, the functions can be easily adjusted to give identical statements about strictly convex functions.)   In the proof of Lemma 4.3 in \cite{PP}, Perelman and Petrunin introduced the notion of lifting the function $f$ via Hausdorff approximations to functions $f_i$ on a sequence of Alexandrov spaces Gromov-Hausdorff converging to $X$.  They noted, by referring back to \cite{P2}, that for large $i$, the lifted functions are also 1-Lipschitz and strictly concave in a neighborhood of radius $\delta$ about $p_i$.   The original proof in \cite{P2} is not very detailed, but Kapovitch gave a complete proof of the strict concavity of both the function $f$ and the lifted functions $f_i$ in \cite{K2} (Lemma 4.2).  Although many of the results in \cite{K2} concern manifolds, the proof of Lemma 4.2 holds for Alexandrov spaces in general.  Kapovitch restated the result in \cite{K} (Lemma 3.1) in the form that most closely resembles the statement of Lemma~\ref{convexfunction} above.  The statement in \cite{K} is for manifolds.  After an application of Theorem 1 in \cite{GW} (see \cite{K2}, Theorem 2.2), the functions $f_i$ are assumed to be smooth.  While we cannot expect smoothness in the case of general Alexandrov spaces, for the reasons outlined here, the statement of Lemma~\ref{convexfunction} above is correct for Alexandrov spaces.

\begin{theorem}\label{limitorbifold}
For $n\geq 3$, let $\mathcal{P}$ be a subcollection of $\iokdvn$ described in Section~\ref{finitepartition}.  Suppose that $\{O_i\}$ is a sequence of orbifolds $\mathcal{P}$.  Then a subsequence of $\{O_i\}$ converges to a topological orbifold $X$.  
\end{theorem}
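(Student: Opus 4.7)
The plan is to extract a Gromov--Hausdorff limit, match up the singular points, pass the universal noncritical charts of Corollary~\ref{universalnoncriticalchart} through an equivariant limit to produce orbifold charts at the limit singular points, and then invoke Perelman's stability theorem both to show that the resulting local covers are topological manifolds and to furnish manifold charts elsewhere.

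First I would apply Gromov's compactness theorem (Theorem~\ref{compactnesstheorem}) to extract a Gromov--Hausdorff convergent subsequence from $\{O_i\}$. The uniform lower volume bound rules out collapse, so the limit $X$ is an $n$-dimensional Alexandrov space, and the diameter bound makes $X$ compact. Labelling the singular points of $O_i$ as $p_i^1,\ldots,p_i^m$ and applying Lemma~\ref{convergingpoints} $m$ times while passing to successive subsequences, I obtain $p^1,\ldots,p^m\in X$ with $p_i^k\to p^k$ for each $k\in\{1,\ldots,m\}$.

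For each $p^k$ an orbifold chart is produced as follows. Corollary~\ref{universalnoncriticalchart} supplies a universal $\varepsilon>0$ and noncritical charts $(\widetilde{U}_i^k,\Gamma^k,\pi_i^k)$ above $B(p_i^k,\varepsilon)\subset O_i$; by the partitioning of Section~\ref{finitepartition} the finite group $\Gamma^k$ and its orthogonal representation at the central lift are the same for every $i$. I would endow each $\widetilde{U}_i^k$ with the length metric pulled back along $\pi_i^k$; then $\widetilde{U}_i^k$ becomes an $n$-dimensional Alexandrov space with curvature $\geq\kappa$, diameter bounded in terms of $\varepsilon$ and the noncritical angle $\alpha$ of Lemma~\ref{technicallemma}, and volume bounded below by $|\Gamma^k|$ times a Bishop--Gromov lower bound for $\Vol(B(p_i^k,\varepsilon))$. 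Applying equivariant Gromov--Hausdorff compactness for pointed isometric $\Gamma^k$-actions and passing to a further subsequence, I obtain an equivariant limit $(\widetilde{U}^k,\Gamma^k,\tilde p^k)$ whose metric quotient is isometric to $B(p^k,\varepsilon)\subset X$. Lemma~\ref{convexfunction}, applied to $\Gamma^k$-averages of distance functions centered at the $\tilde p_i^k$, supplies a $\Gamma^k$-invariant strictly convex function on the covers that gives uniform control of the stability radii at each center.

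The main obstacle is to show that $\widetilde{U}^k$ is a topological manifold identifiable with an open subset of $\mathbb{R}^n$; only then does $(\widetilde{U}^k,\Gamma^k,\pi^k)$ qualify as an orbifold chart. Since each $\widetilde{U}_i^k$ is a smooth $n$-manifold and also an $n$-dimensional noncollapsing Alexandrov space with the common lower curvature bound $\kappa$, and since equivariant convergence implies ordinary Gromov--Hausdorff convergence of the underlying metric spaces, Perelman's stability theorem applies on compact sub-balls around $\tilde p^k$ and yields a homeomorphism from $\widetilde{U}^k$ onto $\widetilde{U}_N^k\subset\mathbb{R}^n$ for all sufficiently large $N$. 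Using this homeomorphism as the Euclidean identification presents $\widetilde{U}^k$ as an open subset of $\mathbb{R}^n$ on which $\Gamma^k$ acts by homeomorphisms with quotient $B(p^k,\varepsilon)$, making $(\widetilde{U}^k,\Gamma^k,\pi^k)$ an orbifold chart. For a point $x\in X\setminus\{p^1,\ldots,p^m\}$, I would choose $\delta>0$ so that $B(x,2\delta)$ is disjoint from the singular set, use Lemma~\ref{convergingpoints} to get $x_i\in O_i$ with $x_i\to x$, and observe that for large $i$ the ball $B(x_i,\delta)$ lies in the manifold locus of $O_i$ by Remark~\ref{noncriticalball}; a second invocation of Perelman's stability theorem then yields a Euclidean chart around $x$. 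The orbifold charts at each $p^k$ together with these manifold charts form an orbifold atlas on $X$, and compatibility on overlaps reduces to the manifold case since every overlap between two charts in the atlas avoids the singular set.
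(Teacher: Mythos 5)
Your overall strategy is genuinely different from the paper's: you try to take an \emph{equivariant} Gromov--Hausdorff limit of the local covers $\widetilde{U}_i^k$ of the noncritical charts and then apply Perelman's stability theorem \emph{upstairs} to see that the limit cover is an open subset of $\mathbb{R}^n$. The paper instead works entirely \emph{downstairs}: it uses Perelman's strictly convex functions (Lemma~\ref{convexfunction}, smoothed away from $p_i$) to cut out compact convex sets $\{h_i\le\ve\}$, applies the stability theorem only to the compact $(n-1)$-dimensional level sets $\{h_i=\ve\}\to\{f=\ve\}$ (using Petrunin's convergence of extremal sets), invokes Perelman's cone-neighborhood theorem to conclude $\{h_i<\ve\}\cong\{f<\ve\}$ with $p_i\mapsto p$ (here $n\ge 3$ is used), and then simply transports the already-existing noncritical chart on $O_i$ through that homeomorphism. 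No limit of covers and no stability upstairs is ever needed.

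The reason this matters is that your route, as written, has genuine gaps at exactly the step you flag as the main obstacle. First, the covers $\widetilde{U}_i^k$ with the pulled-back length metric are incomplete, non-compact open manifolds; Gromov's compactness theorem, Fukaya-type equivariant compactness, and Perelman's stability theorem (as used in this paper) are statements about complete, indeed compact, Alexandrov spaces, and a closed metric ball inside an incomplete space with its intrinsic metric need not itself be an Alexandrov space of curvature $\ge\kappa$. You gesture at fixing this by applying Lemma~\ref{convexfunction} to ``$\Gamma^k$-averages of distance functions centered at the $\tilde p_i^k$,'' but that lemma presupposes an already Gromov--Hausdorff convergent sequence of Alexandrov spaces with a chosen limit point, which is precisely what you are trying to produce for the covers; you would have to construct the invariant strictly convex exhaustion upstairs by hand. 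Second, even granting an equivariant limit $(\widetilde{U}^k,\Gamma^k)$, you assert that its quotient is (isometric, or at least homeomorphic) to $B(p^k,\ve)\subset X$; what equivariant convergence gives is that the quotient of the limit is the limit of the quotients $B(p_i^k,\ve)$ taken with their induced intrinsic metrics, and identifying that limit with an actual neighborhood of $p^k$ in $X$ requires an argument (the paper faces the analogous issue only for the compact level sets, where it can quote Petrunin). Until these points are supplied -- invariant convex compact exhaustions of the covers, a relative/local form of stability applicable to them, and the identification of the limit quotient with a neighborhood of $p^k$ -- the construction of the chart at a limit singular point is not complete. The non-singular case and the compatibility-of-charts remark at the end of your proposal are fine and agree with the paper.
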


\begin{proof}
The proof of this result is based on a proof of a similar result for manifolds in \cite{K}.  We include the details here in order to explain what happens in the case of singular points.

By Gromov's compactness theorem and the uniform lower bound on volume, we know that a subsequence of $\{O_i\}$ converges to an $n$-dimensional Alexandrov space $X$.  By Lemma~\ref{convergingpoints}, we may assume that if $\{p_i\in O_i\}$ is a sequence of matched singular points, then it converges to a point $p$ in $X$.  Thus there are two types of points in $X$: those that are the limit of a sequence of matched singular points, and those that are not.  We construct orbifold charts for each type in turn. 

For a point $p\in O$, where $p=\lim_{i\to\infty}p_i$, let $\delta$, $f$, and $f_i$ be as in Lemma~\ref{convexfunction}.   Suppose first that $p$ is the limit of singular points. From Corollary~\ref{universalnoncriticalchart}, there is a value $\ve^{\prime}$ such that for each $i$, there is a noncritical orbifold chart $(\wtu_i, \Gamma, \pi_{U_i})$ above $B(p_i,\ve^{\prime})$.   We omit the subscript on $\Gamma$ since we are assuming that each $p_i$ has isomorphic isotropy.  Recall that $\ve^{\prime}$ is chosen so that $B(p_i,\ve^{\prime})\backslash\{p_i\}$ is a manifold.  Let $\dpp=\min\{\delta,\ve^{\prime}\}$.  By Theorem 1 in \cite{GW}, we may smooth $f_i$ on $B(p_i,\dpp)$, outside a neighborhood of $p_i$.  Call the new function $h_i$.   Note that since each $h_i$ is strictly convex, $h_i$ achieves a unique minimum value in $B(p_i,\dpp)$.

Choose $\ve$ small enough that $\{h_i\leq \ve\}\subset B(p_i,\dpp)$ for large $i$, $\{h_i\leq\ve\}$ is a convex, compact subset of $O_i$, and $\{f\leq\ve\}$ is a convex, compact subset of $X$.   Since $\{h_i\}$ converges uniformly to $f$, the sequence $\{h_i\leq\ve\}$ Gromov-Hausdorff converges to $\{f\leq \ve\}$.  Thus, by Theorem 1.2 in \cite{Pet}, the sequence of extremal sets $\{h_i=\ve\}$ Gromov-Hausdorff converges to $\{f=\ve\}$ with respect to their induced intrinsic metrics.  

Since $h_i$ is smooth away from $p_i$, $\{h_i=\ve\}$ is a submanifold of $B(p_i,\dpp)$.  As the boundary of a convex set, by the Gauss-Bonnet theorem, it has induced sectional curvature bounded below by $\kappa$.  Since $\{f=\ve\}$ is the limit of a sequence of Alexandrov spaces with curvature bounded below by $\kappa$, it is also an Alexandrov space with the same curvature bound.  Furthermore, since both $\{h_i=\ve\}$ and $\{f=\ve\}$ are boundaries of $n$-dimensional Alexandrov spaces, they have dimension $n-1$ (p.398, \cite{BBI}).  Therefore, by Perelman's stability theorem (\cite{P}, \cite{K}), we can choose a value of $i$ such that $\{h_i=\ve\}$ is homeomorphic to $\{f=\ve\}$.

Since $h_i$ and $f$ each have unique minimum in $B(p_i,\dpp)$ and $B(p,\dpp)$ respectively,  by Main Theorem 1.4 in \cite{P2}, $\{h_i<\ve\}$ is homeomorphic to an open cone over $\{h_i=\ve\}$ and $\{f<\ve\}$ is homeomorphic to an open cone over $\{f=\ve\}$.  Therefore $\{h_i<\ve\}$ is homeomorphic to $\{f<\ve\}$.   Notice that since $f$ has its unique minimum value at $p$, the point $p$ must be the cone point in $\{f<\ve\}$.  Furthermore, since $n\geq 3$ the point $p_i$ is the only point in $\{h_i<\ve\}$ with distinguished, non-manifold topology.  (Note that if $n=2$, the underlying topology of $p_i$ might still be that of a disk.)  Therefore, it must be the case that the homeomorphism from $\{h_i<\ve\}$ to $\{f<\ve\}$ maps $p_i$ to $p$.

By assumption, we have an orbifold chart $(\wtu_i,\Gamma,\pi_{U_i})$ above $B(p_i,\ve^{\prime})$ which we can restrict to an orbifold chart over $\{h_i<\ve\}$.  Composing this chart with the homeomorphism from $\{h_i<\ve\}$ to $\{f<\ve\}$ gives the desired orbifold chart above $\{f<\ve\}$.    

If $p$ is not the limit of singular points, let $2\beta$ be the distance between $p$ and the nearest point $q$ in $X$ that is the limit of a sequence of matched singular points $q_i$.  Then by applying the triangle inequality on $O_i\sqcup_i X$ under the metric described in Remark~\ref{overallmetric}, we see that $d(p_i,q_i)>\beta$ for large $i$.   Suppose again that $\delta$, $f$, and $f_i$ are as in Theorem~\ref{convexfunction}.   Suppose that $\beta^{\prime}<\min\{\delta,\beta\}$.   Then for large $i$ there are no singular points in $B(p_i,\beta^{\prime})$, so $B(p_i,\beta^{\prime})$ is a manifold.   Thus by Theorem 1 in \cite{GW}, we may approximate $f_i$ by a smooth, 1-Lipschitz, strictly convex function $h_i$ on $B(p_i,\beta^{\prime})$, where the sequence $\{h_i\}$ converges uniformly to $f$.  

From this point, the proof of the existence of a manifold chart about $p$ is exactly the same as the proof of Lemma 3.2 in \cite{K} and similar to the proof above of the existence of charts about the singular points.  The key point is to recognize that for small $\ve$, $\{h_i=\ve\}$ is in fact diffeomorphic to a sphere, and thus, by the stability theorem, $\{f=\ve\}$ is also homeomorphic to a sphere.  Thus, the set $\{f<\ve\}$ is an open cone over a sphere, and therefore is a manifold chart about $p$.  Note that in his proof, Kapovitch purposely did not use Perelman's stability theorem, so the proof can be simplified a bit, but the essence is the same.

Thus we have covered $X$ with an atlas of orbifold charts.  There are only two types of charts: those with an isolated singular point in the center, and those that are manifold charts.  The charts only overlap on manifold points, and therefore it is straightforward to check that the compatibility condition for orbifold charts is satisfied for this atlas.

\end{proof}

\vspace{5mm}
\section{Finitely many orbifold homeomorphism types}\label{finiteorbifoldhomeotypes}

We now prove the main results of the paper.

\begin{theorem}\label{maintheorem}
For any $\kappa\in \mathbb{R}$, $D>0$, $v>0$, and $n\in \mathbb{N}$, the collection $\iokdvn$ contains only finitely many orbifold homeomorphism types.  
\end{theorem}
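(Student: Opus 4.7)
The plan is to argue by contradiction, tying together the machinery of the preceding sections. For $n\leq 2$ the theorem is immediate from the classification of closed low-dimensional orbifolds, so assume $n\geq 3$. By Section~\ref{finitepartition}, $\iokdvn$ partitions into finitely many subcollections in which every orbifold has the same number $m$ of singular points with matched isotropy groups $\Gamma^1,\dots,\Gamma^m$ acting by the same orthogonal representation on the tangent space of each local cover; it therefore suffices to show that each such subcollection $\mathcal{P}$ contains finitely many orbifold homeomorphism types. Assume the contrary and choose a sequence $\{O_i\}\subset\mathcal{P}$ of pairwise non-orbifold-homeomorphic orbifolds. By Theorem~\ref{limitorbifold}, after passing to a subsequence, $\{O_i\}$ converges in the Gromov-Hausdorff metric to a topological orbifold $X$ whose singular points $\{p^k\}_{k=1}^m$ are limits of matched singular sequences $\{p_i^k\}$. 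Perelman's stability theorem then produces, for all large $i$, a topological homeomorphism $\phi_i : X \to O_i$.

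The first substantive step is to show that $\phi_i$ carries the singular set of $X$ bijectively onto that of $O_i$. This is a purely topological distinction: by Proposition~\ref{noncriticalchartprop} a punctured neighborhood of $p^k$ is homeomorphic to $(\widetilde U\setminus\{0\})/\Gamma^k$, whose local fundamental group is the nontrivial $\Gamma^k$, whereas a punctured neighborhood of a manifold point is simply connected. Using that $p_i^k\to p^k$ in the ambient metric of Remark~\ref{overallmetric}, one may reindex so that $\phi_i(p^k)=p_i^k$ for each $k$.

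The central step is to produce from $\phi_i$ an orbifold homeomorphism $\Phi_i : X \to O_i$. Since the $\Gamma^k$-representations on the noncritical chart covers of $X$ at $p^k$ and of $O_i$ at $p_i^k$ are isomorphic by Section~\ref{finitepartition}, there is a linear $\Gamma^k$-equivariant isomorphism $L_i^k$ between those covers, which descends to an orbifold homeomorphism $\bar L_i^k$ of small neighborhoods of $p^k$ and $p_i^k$. The plan is to define $\Phi_i$ to equal $\bar L_i^k$ on a small closed ball around each $p^k$, to equal $\phi_i$ outside a slightly larger ball, and to interpolate by a homeomorphism on the manifold annular region in between. Once constructed, $\Phi_i$ is an orbifold homeomorphism because its lift on each noncritical chart is by definition the linear $L_i^k$, while on the manifold part both $\phi_i$ and $\bar L_i^k$ already carry trivial orbifold data; this contradicts the pairwise non-homeomorphism of the $O_i$ and finishes the proof.

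The main obstacle is precisely the annular interpolation: one must verify that the boundary homeomorphisms produced by $\phi_i$ and by $\bar L_i^k$ can be joined by an ambient homeomorphism of the $n$-manifold annulus sitting inside the noncritical chart. In dimension $n\geq 3$ this can be accomplished via standard manifold tools (the annulus theorem together with isotopy extension, with a separate argument needed in dimension $4$), exploiting that $\phi_i$ is an approximate isometry between Gromov-Hausdorff-close spaces so that the two boundary data are small perturbations of the canonical cone-map structure on the noncritical chart.
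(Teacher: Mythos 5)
Your global architecture matches the paper's: partition $\iokdvn$ into finitely many subcollections with matched singular data, argue by contradiction with a sequence $\{O_i\}$ of distinct orbifold homeomorphism types, extract a Gromov--Hausdorff limit orbifold via Theorem~\ref{limitorbifold}, invoke Perelman's stability theorem, and check that singular points go to singular points (your local fundamental group argument for that step is fine for $n\geq 3$). The divergence, and the problem, is in your central step. You propose to \emph{modify} the stability homeomorphism near each singular point, replacing it by a linear model $\bar L_i^k$ and interpolating on an annular region ``via the annulus theorem together with isotopy extension.'' This is a genuine gap. The region in question is not an annulus between two spheres in $\mathbb{R}^n$: its cross-section is the link $S^{n-1}/\Gamma^k$, a spherical space form, so the classical annulus theorem simply does not apply. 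To know the region between $\bar L_i^k(\partial B)$ and $\phi_i(\partial B')$ is a product $\bigl(S^{n-1}/\Gamma^k\bigr)\times[0,1]$ you would need an $h$-/$s$-cobordism--type argument, where $\pi_1 = \Gamma^k$ is a nontrivial finite group (so Whitehead torsion is a live issue, e.g.\ $Wh(\mathbb{Z}/5)\neq 0$), with the usual dimension restrictions (and not just an aside in dimension $4$). Even granting a product structure, extending the two prescribed boundary homeomorphisms over the product requires them to be topologically isotopic as self-maps of the link; the assertion that this follows because $\phi_i$ is ``an approximate isometry'' is not established --- Gromov--Hausdorff closeness gives no control over the isotopy class of the restriction of $\phi_i$ to a link, and the mapping class group of $S^{n-1}/\Gamma^k$ need not be trivial.

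The paper avoids all of this by showing that the stability homeomorphism $f_i$ is \emph{already} an orbifold homeomorphism, with no surgery near the singular points. The observation is covering-theoretic: with charts $\cc$ over $U\ni p$ and $(\wtu_i,\Gamma,\pi_i)$ over $U_i=f_i(U)\ni p_i$, the map $\phi_i=f_i\circ\pi$ exhibits $\utnp$ as a covering of $\uinpi$; since $\wtu$ and $\wtu_i$ are homeomorphic to $\mathbb{R}^n$ and $n\geq 3$, both $\utnp$ and $\uitnpi$ are simply connected, hence $(\utnp,\phi_i)$ and $(\uitnpi,\pi_i)$ are universal covers of $\uinpi$. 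This yields a homeomorphism $\tilde\phi_i:\utnp\to\uitnpi$ with $\pi_i\circ\tilde\phi_i=f_i\circ\pi$, which extends over the cone point by $\tilde p\mapsto \tilde p_i$. No annulus theorem, no isotopy extension, and no use of the matching of linear representations is needed for this step. I recommend either adopting this lifting argument or, if you want to keep your modification-and-interpolation scheme, supplying a complete proof of the product structure of the annular region and of the isotopy between the two boundary homeomorphisms, which is substantially harder than what you have sketched.
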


\begin{remark}
We recall that in contrast with a homeomophism between the underlying space of two orbifolds, an orbifold homeomorphism takes into account both the topology of the underlying space and the orbifold structure on the space (cf. Section~\ref{orbifoldhomeomorphism}).
\end{remark}

\begin{proof}  

The case for $n=1$ follows from the fact that the only closed, compact, $1$-dimensional orbifolds are $S^1$ (with no singular points) and the interval $I$ where the endpoints are singular points with $\mathbb{Z}_2$ isotropy.  For $n=2$, the result was already proven in \cite{PrSt}.  From now on, we will assume that $n\geq 3$.

Following the argument in Section~\ref{finitepartition}, we partition $\iokdvn$ into a finite number of subcollections such that each subcollection contains orbifolds with identical singular data.  Suppose, by way of contradiction, that one of the subcollections, $\mathcal{P}$, contains orbifolds of infinitely many distinct orbifold homeomorphism types.  Let $\{O_i\}\subset\mathcal{P}$ be a sequence of orbifolds each having a distinct orbifold homeomorphism type.  By Theorem~\ref{limitorbifold}, after relabeling, a subsequence $\{O_i\}$ converges to an $n$-dimensional orbifold $O$ with curvature bounded below by $\kappa$, having the same singular data as the orbifolds in the sequence.  

Since an orbifold with a lower bound $\kappa$ on sectional curvature is an example of an Alexandrov space with lower bound $\kappa$ on curvature, Perelman's stability theorem implies that for large $i$, there is a homeomorphism $f_i$ from the underlying space of $O$ to the underlying space of $O_i$.  Suppose that $\{p_i\in O_i\}$ is a sequence of matched singular points with $p_i\to p$.  Recall that by construction of the orbifold atlas on $O$, $p\in O$ is a singular point.   Since $n\geq 3$, and since $p_i$ and $p$ are isolated nonmanifold points, they have distinct topology from all of their neighboring points.  Thus it must be the case that $f_i(p)=p_i$.   We will now show that $f_i$ is an orbifold homeomorphism, thereby contradicting the assumption that $\mathcal{P}$ contains infinitely many distinct orbifold homeomorphism types.

For a singular point $p\in O$ with $f_i(p) =p_i\in O_i$, let $(\wtu, \Gamma, \pi)$ be a chart above a neighborhood $U$ of $p$ and let $(\wtu_i,\Gamma,\pi_i)$ be a chart above the open neighborhood $U_i=f_i(U)$ about $p_i$.  Assume that $U_i$ contains no other singular points and $U$ contains no points that are a limit of a sequence of matched singular points.  We need a map $\tilde f_i:\wtu \to \wtu_i$ such that $\pi_i \circ \tilde f_i= f_i\circ\pi$.  

Consider $\unp$ and $\uinpi$.  Neither contains any singular points, so both are manifolds.  Since $f_i:U\to U_i$ and $f(p)=p_i$, the restriction $f_i:\unp\to\uinpi$ is a homeomorphism.  Define a map $\phi_i:\wtu\backslash\{\tp\}\to \uinpi$ by $\phi_i(\tilde y) = f_i(\pi(\tilde y))$.   Since $f_i$ is a homeomorphism and since $(\wtu\backslash\{\tp\},\pi)$ is a covering space of $\unp$, $(\wtu\backslash\{\tp\},\phi_i)$ is a covering space of $\uinpi$.  

Since $\wtu$ and $\wtu_i$ are both homeomorphic to $\mathbb{R}^n$ and $n\geq 3$, $\utnp$ and $\uitnpi$ are both simply connected.  Thus, $(\uitnpi,\pi_i)$ and $(\utnp,\phi_i)$ are both universal covering spaces of $\unp$.  This implies that there is a homeomorphism $\tilde \phi_i: \utnp\to\uitnpi$ such that $\pi_i\circ \tilde \phi_i= \phi_i$.  But this exactly says that $\pi_i\circ\tilde \phi_i= f_i\circ\pi$.

Define a map $\tilde f_i:\wtu\to\wtu_i$ by $\tilde f_i(\tilde y) = \tilde\phi_i(\tilde y)$ if $\tilde y\in\utnp$ and $\tilde f_i(\tilde p) = \tilde p_i$.  The map $\tilde f_i$ is continuous at $\tilde p$ because both $\tilde\phi_i$ and $f_i$ are homeomorphisms.  Thus we have a homeomorphism $\tilde f_i:\wtu\to \wtu_i$ such that $\pi_i\circ\tilde f_i=f_i\circ\pi$, so $f_i$ is in fact an orbifold homeomorphism as desired.  

\end{proof}

\begin{corollary}\label{spectralcorollary}
Let $\mathcal{IS}(\kappa)$ denote a collection of isospectral orbifolds with sectional curvature bounded below by $\kappa$ and having only isolated singular points.   Then $\mathcal{IS}(\kappa)$ contains orbifolds of only finitely many orbifold homeomorphism types.
\end{corollary}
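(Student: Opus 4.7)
The plan is to reduce Corollary~\ref{spectralcorollary} directly to Theorem~\ref{maintheorem} by showing that any isospectral collection with a uniform lower sectional curvature bound and only isolated singular points automatically sits inside some $\iokdvn$ for suitable constants. The hypotheses on curvature and isolated singularities are given; the missing ingredients are a common dimension $n$, a uniform volume lower bound, and a uniform diameter upper bound.

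First I would invoke the orbifold version of Weyl's asymptotic formula from \cite{DGGW}: the leading asymptotics of the Laplace eigenvalue counting function of a closed Riemannian orbifold determine both the dimension and the volume. Since all orbifolds in $\mathcal{IS}(\kappa)$ share the same spectrum, they share the same dimension $n$ and the same volume $V$. In particular $V$ serves as a uniform lower bound $v = V$ on the volume.

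Next I would apply Stanhope's diameter bound from \cite{St}, which states that for a fixed lower bound $\kappa$ on sectional curvature, any isospectral family of orbifolds has a uniformly bounded diameter. This produces a constant $D > 0$ with $\operatorname{diam}(O) \leq D$ for every $O \in \mathcal{IS}(\kappa)$. Combined with the previous step, we have $\mathcal{IS}(\kappa) \subseteq \iokdvn$ for these values of $\kappa$, $D$, $v$, and $n$.

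The conclusion is then immediate from Theorem~\ref{maintheorem}, which asserts that $\iokdvn$ contains only finitely many orbifold homeomorphism types. There is no substantive obstacle here, since the hard analytic work — the compactness argument, the construction of noncritical orbifold charts, and the upgrade of the Perelman stability homeomorphism to an orbifold homeomorphism — has already been absorbed into Theorem~\ref{maintheorem}; the corollary is purely a matter of assembling the known spectral invariants to verify the hypotheses of that theorem.
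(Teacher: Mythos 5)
Your proposal is correct and follows essentially the same route as the paper: isospectrality gives common dimension and volume via \cite{DGGW}, Stanhope's result in \cite{St} (applied via the Ricci lower bound implied by the sectional bound) gives a uniform diameter bound, and the conclusion follows from Theorem~\ref{maintheorem}.
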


\begin{proof}
Since the orbifolds are isospectral, they must all have the same volume and dimension \cite{DGGW}.  Moreover, since they share a uniform lower bound on sectional curvature, they also have a uniform lower bound on Ricci curvature.  Thus, by Proposition 7.4 in \cite{St}, the diameter of each orbifold in the collection is bounded above by some constant $D$.  The result then follows directly from Theorem~\ref{maintheorem}.  
\end{proof}

\vspace{10mm}

\end{document}